\newcommand{\C}{{\mathcal{C}}}
\newcommand{\G}{{\mathcal{G}}}
\newcommand{\Code}{{\mathtt{Code}}}
\newcommand{\DES}{{\mathsf{DES}}}
\newcommand{\des}{{\mathsf{des}}}
\newcommand{\inv}{{\mathsf{inv}}}
\newcommand{\exc}{{\mathsf{exc}}}
\newcommand{\EXC}{{\mathsf{EXC}}}
\newcommand{\fix}{{\mathsf{fix}}}
\newcommand{\fixTwo}{{\mathsf{fix_2}}}
\newcommand{\maj}{{\mathsf{maj}}}
\newcommand{\DEX}{{\mathsf{DEX}}}
\newcommand{\ch}{{\mathrm{ch}}}
\newcommand{\ps}{{\mathrm{ps}_q}}
\newcommand{\rk}{{\mathrm{rk}}}
\newcommand{\Alt}{{\mathsf{Alt}}}
\newcommand{\RAlt}{{\mathsf{RAlt}}}
\newcommand{\Aut}{{\mathrm{Aut}}}
\newcommand{\Hom}{{\mathrm{Hom}}}
\newcommand{\Even}{{\mathsf{Even}}}
\newcommand{\Odd}{{\mathsf{Odd}}}
\renewcommand{\S}{{\mathfrak{S}}}
\newcommand{\x}{{\mathbf{x}}}
\renewcommand{\L}{{\mathcal{L}}}
\newcommand{\Hilb}{{\mathsf{Hilb}}}
\newcommand{\grFrob}{{\mathsf{grFrob}}}
\newcommand{\qbin}[2]{{#1 \brack #2}_q} %q-binomial
\newtheorem{thm}{Theorem}[section]
\newtheorem{cor}[thm]{Corollary}
\newtheorem{prop}[thm]{Proposition}
\newtheorem{lem}[thm]{Lemma}
\theoremstyle{definition}
\newtheorem{defi}[thm]{Definition}
\newtheorem{exa}[thm]{Example}
\theoremstyle{remark}
\newtheorem{rem}{Remark}[section]
\newtheoremstyle{TheoremNum}
   {\topsep}{\topsep}              %%% space between body and thm
   {\itshape}                      %%% Thm body font
    {}                              %%% Indent amount (empty = no indent)
    {\bfseries}                     %%% Thm head font
    {.}                             %%% Punctuation after thm head
    { }                             %%% Space after thm head
    {\thmname{#1}\thmnote{ \bfseries #3}}%%% Thm head spec
\theoremstyle{TheoremNum}
\newtheorem{thmn}{Theorem}
\newtheorem{corn}{Corollary}
\title[(augmented) Chow rings of uniform matroids and their $q$-analogs]{Chow rings and augmented Chow rings of uniform matroids and their $q$-analogs}
\author{Hsin-Chieh Liao}
\address{Department of Mathematics, University of Miami, USA}
\email{h.liao@math.miami.edu}
\date{}
\begin{document}

\maketitle

\begin{abstract}
We study the Hilbert series and the representations of $\S_n$ and $GL_n(\mathbb{F}_q)$ on the (augmented) Chow rings of uniform matroids $U_{r,n}$ and $q$-uniform matroids $U_{r,n}(q)$. The Frobenius series for uniform matroids and their $q$-analogs are computed. As a byproduct, we recover Hameister, Rao, and Simpson's formula for the Hilbert series of Chow rings of $q$-uniform matroids in terms of permutations and further obtain their augmented counterpart in terms of decorated permutations.

We also show that the equivariant Charney--Davis quantity of the (augmented) Chow ring of a matroid is nonnegative (i.e., a genuine representation of a group of automorphisms of the matroid). When the matroid is a uniform matroid and the group is $\S_n$, the representation either vanishes or is a Foulkes representation (i.e., a Specht module of a ribbon shape). Specializing to the usual Charney--Davis quantities, we obtain an elegant combinatorial interpretation of Hameister, Rao, and Simpson's formula for Chow rings of $q$-uniform matroids and its augmented counterpart.
\end{abstract}

% \dottedcontents{<section>}[<left>]{<above-code>}
% {<label width>}{<leader width>}
\dottedcontents{section}[0em]{}{0em}{0.5pc}
\dottedcontents{subsection}[0.5em]{}{0em}{0.5pc}

\setcounter{tocdepth}{1} %How much TOC show; 1: section, 2: subsection, default is 3.
\tableofcontents

\section{Introduction}
Matroids are combinatorial structures that unify various notions of independence across mathematics. Given their generality, they arise in a wide range of settings. Perhaps the most familiar example is from linear algebra, where a matroid abstracts the concept of linear independence of a collection of vectors; such matroids are called linear or realizable. Other important classes include algebraic matroids, coming from algebraic independence relations in field extensions, graphical matroids, whose bases are the forests of a graph, and transversal matroids, defined by partial matchings in a bipartite graph, etc.

Recently, the interactions between matroid theory and algebraic geometry have led to the resolution of several long-standing conjectures about unimodality, log-concavity, and top-heaviness of combinatorial sequences coming from matroids. 
In particular, Adiprasito, Huh, and Katz \cite{AHK2018} proved the long-standing Heron-Rota-Welsh conjecture, which asserts the log-concavity of the Whitney numbers of the first kind for a matroid.
Following the seminal paper \cite{AHK2018}, Braden, Huh, Matherne, Proudfoot, and Wang \cite{BHMPW2020+} resolved the Dowling--Wilson conjecture, which states that the Whitney numbers of the second kind of a matroid are top-heavy. 
The Chow ring $A(M)$ and the augmented Chow ring $\widetilde{A}(M)$ of a matroid $M$ play important roles in the respective proofs of the two conjectures. 
They were shown to satisfy the so-called ``\emph{K\"{a}hler package}'', which includes \emph{Poincar\'{e} duality}, the \emph{Hard Lefschetz property}, and the \emph{Hodge-Riemann relations}. 
Since then, extensive studies of the Chow rings of matroids and their extensions have been conducted, for example \cite{BHMPW2020,ADH2023,HameisterRaoSimpson2021,Mastroeni2022Koszul,Ferroni2022hilbert,EHL2022stellahedral,ANR2023PermRepchow}.

Among these developments, Hameister, Rao, and Simpson \cite{HameisterRaoSimpson2021} studied the Hilbert series of Chow rings of uniform matroids $U_{r,n}$ and their $q$-analogs $U_{r,n}(q)$, and their $(-1)$-evaluations, or equivalently, the \emph{Charney--Davis quantities} (see Section \ref{subsec:CD} for definitions). 
They related the Hilbert series of the Chow ring of the $q$-uniform matroid $U_{r,n}(q)$ to the enumeration of the set $\S_n$ of permutations on $[n]\coloneqq\{1,2,\ldots,n\}$. 

\begin{thm}\label{HRS:ChowQUniform} \cite[Theorem 1.1]{HameisterRaoSimpson2021}
For any positive integer $n$ and $1\le r\le n$,
\[
    \Hilb(A(U_{r,n}(q)),t)=\sum_{\sigma\in\S_n}q^{\maj(\sigma)-\exc(\sigma)}t^{\exc(\sigma)}-\sum_{j=r}^{n-1}\sum_{\substack{\sigma\in\S_n \\ \fix(\sigma)\ge n-j}}q^{\maj(\sigma)-\exc(\sigma)}t^{j-\exc(\sigma)},
\]  
where $\maj(\sigma)$ and $\exc(\sigma)$ are the classical major index and excedence number, and $\fix(\sigma)$ is the number of fixed points of a permutation $\sigma$.
\end{thm}
 In particular, when the rank $r$ is $n$ and $n-1$, the corresponding Hilbert series are given by the $q$-analogs of Eulerian polynomials $A_n(q,t)$ for $\S_n$ and $d_n(q,t)$ for the set $D_n$ of derangements studied by Shareshian and Wachs \cite{ShareshianWachs2010}. See Section \ref{subsubsec:Eulerian}, the \emph{Eulerian story}, for a detailed discussion of these polynomials.
 
\begin{cor}\label{HRS:qHilbChowSpecial} \cite[Corollary 3.3, 3.12]{HameisterRaoSimpson2021}
For the special case that $r=n$ and $r=n-1$ we have
\begin{itemize}
    \item[(i)] \(\displaystyle \Hilb(A(U_{n,n}(q)),t) =\sum_{\sigma\in\S_n}q^{\maj(\sigma)-\exc(\sigma)}t^{\exc(\sigma)}=A_n(q,t).\)
    \item[(ii)]\(\displaystyle \Hilb(A(U_{n-1,n}(q)),t) =\sum_{\sigma\in D_n}q^{\maj(\sigma)-\exc(\sigma)}t^{\exc(\sigma)-1}=t^{-1}d_n(q,t).\)
\end{itemize}
\end{cor}

% \begin{equation}\label{HRS:qBoolean}
%     \Hilb(A(U_{n,n}(q)),t)=\sum_{\sigma\in\S_n}q^{\maj(\sigma)-\exc(\sigma)}t^{\exc(\sigma)}=A_n(q,t)
% \end{equation}
% \begin{equation} \label{HRS:ChowDerangement}
%     \Hilb(A(U_{n-1,n}(q)),t)=\sum_{\sigma\in D_n}q^{\maj(\sigma)-\exc(\sigma)}t^{\exc(\sigma)-1}=t^{-1}d_n(q,t).
% \end{equation}
For a positive integer $n$, define  $[n]_q\coloneqq 1+q+\ldots+q^{n-1}$, $[n]_q!\coloneqq\prod_{i=1}^n [i]_q$, and the \emph{$q$-binomial coefficient} $\qbin{n}{k}\coloneqq\frac{[n]_q!}{[k]_q![n-k]_q!}$.
Hameister, Rao, and Simpson also showed that the Charney--Davis quantity of $A(U_{r,n}(q))$ is given by
\begin{equation} \label{HRS:sign}
    \Hilb(A(U_{r,n}(q)),-1)=\begin{cases}
    0 & \mbox{ if $r$ is even}\\
    \displaystyle{\sum_{k=0}^{\frac{r-1}{2}}(-1)^k\qbin{n}{2k}E_{2k}(q)} & \mbox{ if $r$ is odd}
    \end{cases},
\end{equation}
where $E_{2k}(q)=\sum_{\sigma\in\RAlt_{2k}}q^{\inv(\sigma)}$ is a $q$-analog of the $k$-th \emph{secant number} and $\RAlt_{2k}=\{\sigma\in\S_{2k}\,:\, \sigma_1<\sigma_2>\ldots>\sigma_{2k-1}<\sigma_{2k}\}$ the set of \emph{reverse alternating permutations}.\footnote{In this paper, we follow the terminology in Stanley's survey \cite{StanleyAlternatingSurvey} calling such permutations \emph{reverse alternating} and the permutations with $\sigma_1>\sigma_2<\sigma_3\ldots$ \emph{alternating}. 
In \cite[Remark 4.5]{HameisterRaoSimpson2021}, they reverse the terminology and the $q^{\exc(\sigma)}$ in the expression of $E_{2k}(q)$ is a typo.}
The alternating sum in the case where $r$ is odd in (\ref{HRS:sign}) can also be expressed as an alternating sum of determinants:
\begin{equation} \label{HRS:determinant}
1+[n]_q!\sum_{k=0}^{\frac{r-1}{2}}\frac{(-1)^k}{[n-2k]_q!}
\left|\begin{array}{ccccc}
\frac{1}{[2]_q!} &         1        &   0   & \ldots & 0 \\
\frac{1}{[4]_q!} & \frac{1}{[2]_q!} &   1   & \ldots & 0 \\
\vdots           & \vdots           & \vdots& \ddots & \vdots\\
\frac{1}{[2k-2]_q!} & \frac{1}{[2k-4]_q!} & \frac{1}{[2k-6]_q!} & \ldots & 1\\
\frac{1}{[2k]_q!} & \frac{1}{[2k-2]_q!}& \frac{1}{[2k-4]_q!} & \ldots & \frac{1}{[2]_q!}
\end{array}\right|.
\end{equation}

In this paper, we continue the study of Chow rings of uniform matroids and $q$-uniform matroids initiated by Hameister, Rao, and Simpson. 
Using the Feichtner--Yuzvinsky type basis for the augmented Chow ring $\widetilde{A}(M)$ found by the author \cite[Corollary 5.11]{Liao2023One}, and independently by Eur, Huh, and Larson \cite[Lemma 7.8]{EHL2022stellahedral}, we build the parallel story for the augmented Chow rings of $U_{r,n}$ and $U_{r,n}(q)$.
Besides their Hilbert series and corresponding Charney--Davis quantities, we further explore their equivariant extensions. Our main results can be divided into three parts: Hilbert series for $q$-uniform matroids, Frobenius series for uniform matroids, and equivariant Charney--Davis quantities for general matroids.

\subsection{Hilbert series: $q$-uniform matroids} \label{subsec:q-uniform} 
The augmented counterpart we establish involves the combinatorics parallel to the Eulerian story, which we refer to as the \emph{binomial Eulerian story} introduced in Section \ref{subsubsec:BinomialEulerian}. For example, this includes decorated permutations $\widetilde{\S}_n$, binomial Eulerian polynomials $\widetilde{A}_n(t)$ and their $q$-analogs $\widetilde{A}_n(q,t)$, binomial Eulerian quasisymmetric function $\widetilde{Q}_n(\x,t)$, etc.

\begin{thm} \label{thm:HilbAugQUniform}
 The Hilbert series $\Hilb(\widetilde{A}(U_{r,n}(q)),t)$ has the following two expressions
 \begin{itemize}
     \item[(i)] \(\displaystyle 1+t\sum_{j=0}^{r-1}\qbin{n}{j}A_j(q,t)(1+t+\ldots+t^{r-j-1})\),
     \item[(ii)] \(\displaystyle\sum_{\sigma\in\widetilde{\S}_n}q^{\maj(\sigma)-\exc(\sigma)}t^{\exc(\sigma)+1}-\sum_{j=r}^{n-1}\sum_{\substack{
    \sigma\in\widetilde{\mathfrak{S}}_n\\
    \fixTwo(\sigma)\ge n-j}}q^{\maj(\sigma)-\exc(\sigma)}t^{j-\exc(\sigma)}\).
 \end{itemize}
\end{thm}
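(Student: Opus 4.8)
The plan is to derive both expressions for $\Hilb(\widetilde{A}(U_{r,n}(q)),t)$ by applying the operator $\prod_{i=1}^n(1-q^i)\ps(-)$ to the two expressions for the $q$-Frobenius series $\grFrob_q(\widetilde{A}(U_{r,n}(q))_{\mathbb{C}},t)$ given in Theorem \ref{thm:FrobAugUniform}, exactly in parallel to how Theorem \ref{thm:HilbQUniform} follows from Theorem \ref{thm:FrobChowUniform}. The key structural fact, established in Section \ref{Sec:Parellel} (see (\ref{eq:DimIsPSQ}) and (\ref{eq:FrobIsQFrob})), is that $\dim_{\mathbb{Q}}\widetilde{A}^i(U_{r,n}(q)) = \prod_{i=1}^n(1-q^i)\,\ps\!\big(\ch(\widetilde{A}^i(U_{r,n})_{\mathbb{C}})\big)$, so that $\Hilb(\widetilde{A}(U_{r,n}(q)),t)$ is obtained termwise from the graded $q$-Frobenius series by this specialization. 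Since $\ps$ is a ring homomorphism, it passes through the sums and products appearing in both expressions of Theorem \ref{thm:FrobAugUniform}.

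For expression (i): starting from $h_n + t\sum_{j=0}^{r-1} h_{n-j}\,Q_j(\x,t)\,(1+t+\cdots+t^{r-j-1})$, I would apply $\prod_{i=1}^n(1-q^i)\ps(-)$. The subtlety is bookkeeping with the $q$-factors: one wants to write $\prod_{i=1}^n(1-q^i)\ps(h_{n-j}Q_j) = \qbin{n}{j}\cdot\big[\prod_{i=1}^{n-j}(1-q^i)\ps(h_{n-j})\big]\cdot\big[\prod_{i=1}^{j}(1-q^i)\ps(Q_j)\big]$ after recognizing $\prod_{i=1}^n(1-q^i) = \prod_{i=1}^{n-j}(1-q^i)\cdot\prod_{i=1}^{j}(1-q^i)\cdot\big[\text{the }q\text{-multinomial denominator cleared}\big]$; concretely $\ps(h_{n-j}) = 1/\prod_{i=1}^{n-j}(1-q^i)$ kills one block, and Theorem \ref{Thm:psEulerianQ}(a) gives $\prod_{i=1}^{j}(1-q^i)\ps(Q_j(\x,t)) = A_j(q,t)$, while the leftover $q$-binomial factor assembles into $\qbin{n}{j}$. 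The term $h_n$ contributes $\prod_{i=1}^n(1-q^i)\ps(h_n) = 1$. This yields $1 + t\sum_{j=0}^{r-1}\qbin{n}{j}A_j(q,t)(1+t+\cdots+t^{r-j-1})$.

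For expression (ii): starting from $\widetilde{Q}_n(\x,t) - \sum_{j=r}^{n-1}\sum_{\sigma\in\widetilde{\S}_n,\ \fixTwo(\sigma)\ge n-j} F_{\DEX(\sigma),n}\,t^{j-\exc(\sigma)}$, apply the same operator. For the first term, Theorem \ref{thm:QDecoratedPerm} gives $\prod_{i=1}^n(1-q^i)\ps(\widetilde{Q}_n(\x,t)) = \sum_{\sigma\in\widetilde{\S}_n} q^{\maj(\sigma)-\exc(\sigma)}t^{\exc(\sigma)+1}$. For the subtracted sum, I would apply $\ps$ inside the finite double sum and invoke the extension of Corollary \ref{cor:SumDEX} to $\widetilde{\S}_n$, namely $\ps(F_{\DEX(\sigma),n}) = q^{\maj(\sigma)-\exc(\sigma)}/\prod_{i=1}^n(1-q^i)$, which holds by Lemma \ref{Lem:psF} together with Proposition \ref{Prop:ExtDEXSum}; multiplying by $\prod_{i=1}^n(1-q^i)$ clears the denominator and leaves $q^{\maj(\sigma)-\exc(\sigma)}t^{j-\exc(\sigma)}$. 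The condition $\fixTwo(\sigma)\ge n-j$ is preserved since it is a condition on $\sigma$, not on the symmetric-function side. This produces the stated formula. The main obstacle is purely organizational: verifying the $q$-factor accounting in (i) so that the $q$-multinomial coefficients emerge correctly from $\prod_{i=1}^n(1-q^i)\ps(h_{n-j}Q_j)$, and confirming that Proposition \ref{Prop:ExtDEXSum} indeed yields the analogue of Corollary \ref{cor:SumDEX} over $\widetilde{\S}_n$ including the degenerate case $\sigma=\theta$ (where $\maj(\theta)-\exc(\theta)=0$ and the contribution is $t^0$, consistent with the $j$-th summand when $\exc=-1$). No genuinely new idea is needed beyond what is already assembled in Sections \ref{sec:Background} and \ref{Sec:Parellel}.
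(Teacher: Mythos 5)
Your proposal is correct and is exactly the paper's argument: the paper obtains Theorem \ref{thm:HilbAugQUniform} precisely by applying $\prod_{i=1}^n(1-q^i)\ps(-)$ to the two expressions in Theorem \ref{thm:FrobAugUniform}, using Theorem \ref{Thm:psEulerianQ}(a), Theorem \ref{thm:QDecoratedPerm}, and the extension of Corollary \ref{cor:SumDEX} to $\widetilde{\S}_n$ via Lemma \ref{Lem:psF} and Proposition \ref{Prop:ExtDEXSum}. Your $q$-factor bookkeeping producing $\qbin{n}{j}$ and your check of the degenerate case $\sigma=\theta$ are both accurate.
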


The first formula (Theorem \ref{thm:HilbAugQUniform}(i)) is a $q$-analog of the formula for the augmented Chow ring of arbitrary uniform matroids in recent work by Ferroni, Matherne, Stevens, and Vecchi in \cite[Second formula in Theorem 1.9]{Ferroni2022hilbert}. 

The second formula (Theorem \ref{thm:HilbAugQUniform}(ii)) bears much resemblance to Hameister, Rao, and Simpson's formula in Theorem \ref{HRS:ChowQUniform}. In particular, when the rank $r$ is $n$ and $n-1$, the following formulas are analogous to those in Corollary \ref{HRS:qHilbChowSpecial}.
% Applying the operator $\prod_{i=1}^n(1-q^i)\ps(-)$ to Corollary \ref{cor:FrobAugSpecial} yields the following:

\begin{cor} \label{cor:SpeicalCaseHilbAug}
For the special case that $r=n$ and $r=n-1$ we have
\begin{itemize}
    \item[(i)]\(\displaystyle \Hilb(\widetilde{A}(U_{n,n}(q)),t)=\sum_{\sigma\in\widetilde{\S}_n}q^{\maj(\sigma)-\exc(\sigma)}t^{\exc(\sigma)+1}=\widetilde{A}_n(q,t)\),
    \item[(ii)]\(\displaystyle\Hilb(\widetilde{A}(U_{n-1,n}(q)),t)=\sum_{\sigma\in\S_n}q^{\maj(\sigma)-\exc(\sigma)}t^{\exc(\sigma)}=A_n(q,t)\).
\end{itemize}    
\end{cor}

For the non-augmented case, we also find a different expression of Theorem \ref{HRS:ChowQUniform} similar to Theorem \ref{thm:HilbAugQUniform} (i). It is a $q$-analog of Ferroni, Matherne, Stevens, and Vecchi's formula for the Chow ring of arbitrary uniform matroids in \cite[first formula in Theorem 1.9]{Ferroni2022hilbert}. 
% This is in Theorem \ref{thm:HilbQUniform} (i). To compare with the augmented analog above, here we summarize the non-augmented version.

\begin{thm} \label{thm:HilbQUniform}
For any positive integer $n$ and $1\le r\le n$,
\[
    \Hilb(A(U_{r,n}(q)),t)=\sum_{j=0}^{r-1}\qbin{n}{j}d_j(q,t)(1+t+\ldots+t^{r-j-1}).
\]
%The Hilbert series \Hilb(A(U_{r,n}(q)),t)
%\begin{itemize}
%     \item [(i)] \(\displaystyle \sum_{j=0}^{r-1}\qbin{n}{j}d_j(q,t)(1+t+\ldots+t^{r-j-1})\),
%     \item[(ii)] (\cite{HameisterRaoSimpson2021}) \(\displaystyle\sum_{\sigma\in\S_n}q^{\maj(\sigma)-\exc(\sigma)}t^{\exc(\sigma)}-\sum_{j=r}^{n-1}\sum_{\substack{\sigma\in\S_n\\ \fix(\sigma)\ge n-j}}q^{\maj(\sigma)-\exc(\sigma)}t^{j-\exc(\sigma)}.\)
% \end{itemize}
\end{thm}
% Applying the operator $\prod_{i=1}^n(1-q^i)\ps(-)$ to Corollary \ref{cor:FrobChowSpecial}, we recover Hameister, Rao, and Simpson's formulas (\ref{HRS:qBoolean}), (\ref{HRS:ChowDerangement}).

We note that all our results hold as stated in the case $q=1$ if we view $U_{r,n}(1)\coloneqq U_{r,n}$.

\subsection{Frobenius series: uniform matroids} \label{subsec:uniform}
Parallel to the $q$-uniform matroids, we study the representation of $\S_n$ on the Chow rings and the augmented Chow rings of uniform matroids. 
These representations are encoded as their \emph{graded Frobenius series}, which are polynomials with symmetric function coefficients (See Section \ref{subsec:SymRep}).
For this purpose, we pass the underlying fields of the Chow rings to $\mathbb{C}$. For a matroid $M$, we write $A(M)_{\mathbb{C}}\coloneqq\mathbb{C}\otimes_{\mathbb{Q}}A(M)$ and  $\widetilde{A}(M)_{\mathbb{C}}\coloneqq\mathbb{C}\otimes_{\mathbb{Q}}\widetilde{A}(M)$. The symmetric function formulas that we obtained involve variations of \emph{Eulerian quasisymmetric functions} $Q_n^0(\x,t)$, $Q_n(\x,t)$, $\widetilde{Q}_n(\x,t)$, which can be viewed as $\S_n$-equivariant enumerators of derangements $D_n$, permutations $\S_n$, and decorated permutations $\widetilde{\S}_n$, respectively. Their explicit expressions are in Definition \ref{def:EulerianQuasi} and Theorem \ref{thm:FexpWideQ}. See Table \ref{table:relations} in Section \ref{subsubsec:BinomialEulerian} for the relationships between these symmetric functions and variations of Eulerian polynomials.

\begin{thm} \label{thm:FrobChowUniform}
The graded Frobenius series of the Chow ring of an arbitrary uniform matroid $\grFrob(A(U_{r,n})_{\mathbb{C}},t)$ has two expressions:
\begin{itemize}
    \item[(i)] \(\displaystyle \sum_{j=0}^{r-1}h_{n-j}Q_j^0(\x,t)(1+t+\cdots+t^{r-j-1})\),
    \item[(ii)] \(\displaystyle Q_n(\x,t)-\sum_{j=r}^{n-1}\sum_{\substack{\sigma\in\S_n\\ \fix(\sigma)\ge n-j}}F_{\DEX(\sigma),n}t^{j-\exc(\sigma)}\).
\end{itemize}
\end{thm}

\begin{cor} \label{cor:FrobChowSpecial}
For the special case that $r=n$ and $r=n-1$, we have
\begin{itemize}
    \item[(i)] \(\displaystyle \grFrob(A(U_{n,n})_{\mathbb{C}},t)=Q_n(\x,t),\)
    \item[(ii)]\(\displaystyle \grFrob(A(U_{n-1,n})_{\mathbb{C}},t)=t^{-1}Q_n^0(\x,t).\)
\end{itemize}
\end{cor}

\begin{thm} \label{thm:FrobAugUniform}
The graded Frobenius series of the augmented Chow ring of an arbitrary uniform matroid $\grFrob(\widetilde{A}(U_{r,n})_{\mathbb{C}},t)$ has two expressions:
\begin{itemize}
    \item[(i)] \(\displaystyle h_n+t\sum_{j=0}^{r-1}h_{n-j}Q_j(\x,t)(1+t+\cdots+t^{r-j-1})\),
    \item[(ii)] \(\displaystyle \widetilde{Q}_n(\x,t)-\sum_{j=r}^{n-1}\sum_{\substack{\sigma\in\widetilde{\S}_n\\ \fixTwo(\sigma)\ge n-j}}F_{\DEX(\sigma),n}t^{j-\exc(\sigma)}\),
\end{itemize}
where $\fixTwo(\sigma)$ is the number of $0$s in the one-line notation of the decorated permutation $\sigma$.
\end{thm}

\begin{cor} \label{cor:FrobAugSpecial}
For the special case that $r=n$ and $r=n-1$, we have
\begin{itemize}
    \item[(i)] \(\displaystyle \grFrob(\widetilde{A}(U_{n,n})_{\mathbb{C}},t)=\widetilde{Q}_n(\x,t).\)
    \item[(ii)]\(\displaystyle \grFrob(\widetilde{A}(U_{n-1,n})_{\mathbb{C}},t)=Q_n(\x,t).\)
\end{itemize}
\end{cor}

\begin{rem} \label{rem:BooleanIsProved}
Corollary \ref{cor:FrobChowSpecial} (i) (resp. \ref{cor:FrobAugSpecial} (i)) was proved in \cite[Theorem 6.9]{Liao2023One} (resp. in \cite[Theorem 6.22]{Liao2023One}) by constructing an $\S_n$-equivariant bijection between Feichtner--Yuzvinsky basis $FY(U_{n,n})$ and the Stembridge codes introduced by Stembridge \cite{Stembridge1992} (resp. between $\widetilde{FY}(U_{n,n})$ and the extended codes introduced in \cite{Liao2023One}).
\end{rem}

% \begin{rem} \label{rem:AugBooleanIsProved}
% Corollary \ref{cor:FrobAugSpecial} (i) was proved by the author in \cite[Theorem 6.22]{Liao2023One} by constructing an $\S_n$-equivariant bijection between the basis $\widetilde{FY}(U_{n,n})$ and the extended codes.
% \end{rem}

Combining Theorem \ref{thm:FrobAugUniform} (i) and Corollary \ref{cor:FrobAugSpecial} (ii) in the case of $r=n-1$ gives the following recurrence relation for the Eulerian quasisymmetric function $Q_n(\x,t)$
\[
    Q_n(\x,t)=h_n+t\sum_{j=0}^{n-2}h_{n-j}Q_j(\x,t)(1+t+\cdots+t^{n-j-2}),
\]
which appears in Shareshain and Wachs' work in \cite[Corollary 4.1 when $r=1$]{ShareshianWachs2010}.

While it is not obvious, the $q$-polynomials for $q$-uniform matroids in Theorems \ref{HRS:ChowQUniform}, \ref{thm:HilbQUniform}, \ref{thm:HilbAugQUniform} and Corollaries \ref{HRS:qHilbChowSpecial}, \ref{cor:SpeicalCaseHilbAug} can be obtained from the symmetric function formulas for uniform matroids in Theorems \ref{thm:FrobChowUniform}, \ref{thm:FrobAugUniform} and Corollaries \ref{cor:FrobChowSpecial}, \ref{cor:FrobAugSpecial} respectively, by applying the stable principal specialization operator $\prod_{i=1}^n(1-q^i)\ps(-)$ (see Section \ref{subsubsec:Eulerian}). This phenomenon is stated in Theorem \ref{thm:psToDim} and is proved in Section \ref{Sec:Parellel} via Steinberg's theory of unipotent representations of $GL_n(\mathbb{F}_q)$.

\subsection{Equivariant Charney--Davis quantities for general matroids} \label{subsec:EQCD}
In our study of the Charney--Davis quantities, we generalize Hameister, Rao, and Simpson's formula (\ref{HRS:sign}) to arbitrary matroids in an equivariant setting. A unified interpretation of the Charney--Davis quantities of the Chow ring and the augmented Chow ring of an arbitrary matroid is obtained. See Section \ref{subsec:CD} for more details on the Charney--Davis quantity and its equivariant extension, and Section \ref{subsec:RankSelection} for the poset homology of a rank-selected poset.

Denote by $\Even(k)$ the set of even numbers from $2$ up to $k$ for an even integer $k$, and by $\Odd(\ell)$ the set of odd numbers from $1$ up to $\ell$ for an odd integer $\ell$, i.e.
\[
    \Even(k)=\{2,4,6,\ldots,k\},\quad \Odd(\ell)=\{1,3,5,\ldots,\ell\}.
\]
Let $G$ be a subgroup of the automorphism group $\Aut(M)$ of a loopless matroid $M$. 
Consider the $G$-equivariant Hilbert series of $A(M)_{\mathbb{C}}$. 
Then the following theorem gives an interpretation for the $G$-equivariant Charney--Davis quantity for the Chow ring of a matroid.

\begin{thm} \label{thm:EqCDChow}
Let $M$ be a (loopless) matroid on $[n]$ of rank $r$, then
\[
    \sum_{i=0}^{r-1}(-1)^i A^i(M)_{\mathbb{C}}\cong_{G}
    \begin{cases}
        0   &   \text{ if $r$ is even}\\
        (-1)^{\frac{r-1}{2}}\tilde{H}_{\frac{r-3}{2}}\left(\L(M)_{\Even(r-1)}\right)
         & \text{ if $r$ is odd}
    \end{cases}.
\]
% or equivalently, 
% \[
%     CD^G(A(M))\cong_G\begin{cases}
%         0   &   \text{ if $r$ is even}\\
%         \tilde{H}_{\frac{r-3}{2}}\left(\L(M)_{\Even(r-1)}\right)
%          & \text{ if $r$ is odd}
%     \end{cases}.
% \]
In particular, the $G$-equivariant Charney--Davis quantity $CD^G(A(M))$ is nonnegative in $R_{\mathbb{C}}(G)$, i.e., it is a genuine representations of $G$.
\end{thm}

For the augmented case, an interpretation for the $G$-equivariant Charney--Davis quantity is given by the following theorem.

\begin{thm} \label{thm:EqCDAug}
Let $M$ be a matroid on $[n]$ of rank $r$. Then
\[
    \sum_{i=0}^{r}(-1)^i\widetilde{A}^i(M)_{\mathbb{C}}\cong_{G}
    \begin{cases}
        (-1)^{\frac{r}{2}}\tilde{H}_{\frac{r-2}{2}}\left(\L(M)_{\Odd(r-1)}\right)   &   \text{ if $r$ is even}\\
        0   & \text{ if $r$ is odd}
    \end{cases}.
\]
% or equivalently,
% \[
%     CD^G\left(\widetilde{A}(M)\right)\cong_G
%     \begin{cases}
%         \tilde{H}_{\frac{r-2}{2}}\left(\L(M)_{\Odd(r-1)}\right)   &   \text{ if $r$ is even}\\
%         0   & \text{ if $r$ is odd}
%     \end{cases}.
% \]
In particular, the $G$-equivariant Charney--Davis quantity $CD^G\left(\widetilde{A}(M)\right)$ is nonnegative in $R_{\mathbb{C}}(G)$.
\end{thm}

The cases when $M$ is a uniform matroid $U_{r,n}$ and a $q$-uniform matroid $U_{r,n}(q)$ are treated immediately after the proofs of Theorems \ref{thm:EqCDChow} and \ref{thm:EqCDAug} in Section \ref{subsec:CDUniform}. 
In particular, we show that (\ref{HRS:sign}) admits an elegant combinatorial interpretation, and that its augmented analog likewise admits a compatible combinatorial interpretation (See Theorem \ref{thm:SignedQUniform}).

The paper is organized as follows. 
Section \ref{sec:Background} introduces preliminary definitions and background. Sections \ref{Sec:ProofFrobChow} and \ref{Sec:ProofFrobAug} establish our Frobenius-series formulas for the Chow rings and the augmented Chow rings of uniform matroids, respectively. 
 In Section \ref{Sec:Parellel}, we describe the parallelism between $\S_n$-representations and unipotent $GL_n(\mathbb{F}_q)$-representations, and apply it to the (augmented) Chow rings of uniform matroids and $q$-uniform matroids. 
Consequently, the $q$-uniform case reduces to the uniform case handled in the preceding sections.
Section \ref{Sec:GeneralEQ} presents our general results on Charney–Davis quantities. In Section \ref{subsec:generalCD}, we prove nonnegativity of the equivariant Charney–Davis quantity for arbitrary matroids. Section \ref{subsec:CDUniform} examines the Charney–Davis quantity for uniform matroids in the classical, $q$-uniform, and equivariant settings. Finally, Section \ref{sec:Final} outlines ongoing work extending these results to $\gamma$-positivity.

\section{Background} \label{sec:Background}

\subsection{Matroids} \label{subsec:Matroids}
We first briefly review some basics of matroids. 
For a more thorough introduction to matroids, we refer the readers to Oxley \cite{Oxley2006matroid}. Some familiarity with terminologies on partially ordered sets (posets) is assumed. See Stanley \cite[Chapter 3]{StanleyEC1} for a good reference. 
There are many ways to define a matroid, and they are \emph{cryptomorphic} to each other (equivalent, but not obviously). Due to our purpose, here we define a matroid in terms of its \emph{flats}.  

\begin{defi}
A matroid $M=(E,\L(M))$ consists of a finite ground set $E$ and a collection of subsets $\L(M)\subseteq 2^{E}$ satisfy the following axioms:
\begin{itemize}
    \item[($F_1$)] $E\in \L(M)$.
    \item[($F_2$)] If $F$, $F'\in\L(M)$, then $F\cap F'\in\L(M)$.
    \item[($F_3$)] For any $F\in\L(M)$ and any $i\in E\setminus F$, there exists a unique $F'$ containing $i$ and \emph{covering} $F$, in the sense that $F\subsetneq F'$ and no other $F''\in \L(M)$ such that $F\subsetneq F''\subsetneq F'$.
\end{itemize}
\end{defi}

The subsets in $\L(M)$ are called the \emph{flats} of $M$. 
These axioms are the abstraction of the model where $E$ is a collection of vectors $v_1,\ldots,v_n$ in a vector space over some field $\mathbb{F}$, and $\L(M)$ is the set of subspaces $\{\mathrm{span}_{\mathbb{F}}\{v_i\}_{i\in S}: S\subseteq E\}$ in which each subspace $W$ is represented by the maximal subcollection of $v_1,\ldots,v_n$ that spans $W$. 

The axioms $(F_1)$, $(F_2)$ imply that $\L(M)$ with the inclusion order forms a lattice with \emph{meet} $F\wedge F'=F\cap F'$ and \emph{join} $F\vee F'=\bigcap_{F''\supseteq F,F'}F''$ for any $F$, $F'\in\L(M)$. 
The axiom $(F_3)$ further implies that $\L(M)$ is a \emph{geometric lattice}. We call $\L(M)$ the \emph{lattice of flats of $M$}. It is well-known \cite[Chapter 3]{StanleyEC1} that any finite geometric lattice is a lattice of flats of some matroid.

Denote $\rk_M$ the rank function of $\L(M)$. We say the \emph{rank} of the matroid $M$ is $r$ if $\rk_M(E)=r$. 
For any subset $S\subseteq E$, the closure $\overline{S}$ of $S$ is the minimal flat containing $S$; in other words, $\overline{S}=\bigcap_{F\supseteq S, F\in\L(M)}F$. Then one can extend the rank function $\rk_M$ to $2^{E}\rightarrow \mathbb{N}$ by defining $\rk_M(S)\coloneqq\rk_M(\overline{S})$ for any $S\subseteq E$. We call an element $i\in E$ a \emph{loop} if $\rk_M(i)=0$.

A permutation $\sigma$ on $E$ is an \emph{automorphism of the matroid} $M$ if $\rk_M(\sigma(X))=\rk_M(X)$ for all subsets $X\subseteq E$. 
Denote $\Aut(M)$ the set of automorphisms of $M$. 
In particular, an automorphism $\sigma\in\Aut(M)$ maps a flat $F\in\L(M)$ to a flat  $\sigma(F)\in\L(M)$ and $\rk_M(\sigma(F))=\rk_M(F)$ for all $F\in\L(M)$. Hence, an automorphism $\sigma$ permutes the chains in $\L(M)$.

In this paper, most of the time we will focus on the uniform matroids and their $q$-analogs. 
The \emph{uniform matroid} $U_{r,n}$ ($r\le n$) is a matroid on $E=[n]$ of rank $r$ whose collection of flats are  
\[
    \{S\subseteq [n]: |S|\le r-1\}\cup\{[n]\}.
\]
The lattice of flats $\L(U_{r,n})$ is a truncated Boolean lattice. 
In particular, when $r=n$, the lattice $\L(U_{n,n})$ is the Boolean lattice $B_n=(2^{[n]},\subseteq)$, hence $U_{n,n}$ is also called a Boolean matroid.

\begin{exa} \label{exa:uniform2}
The lattices of flats of uniform matroids $U_{r,n}$ when $n=3$ are the truncated Boolean lattices as shown in Figure \ref{fig:uniform}.

\begin{figure}[h]
    \begin{subfigure}{0.3\textwidth}
        \includegraphics{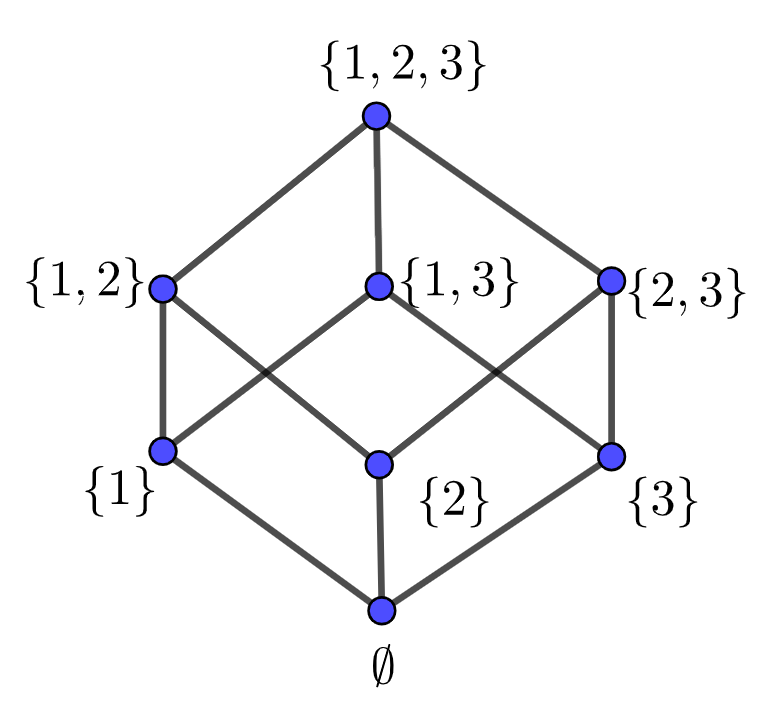}
        \caption{Boolean lattice $\L(U_{3,3})$}
        \label{fig:B_33}    
    \end{subfigure}
    \hspace{1 cm}
    \begin{subfigure}{0.3\textwidth}
        \includegraphics{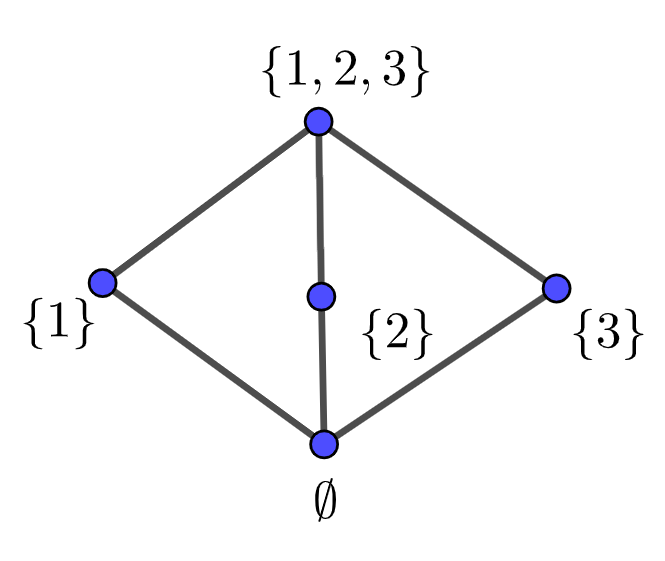}
        \caption{$\L(U_{2,3})$}
        \label{fig:B_32}        
    \end{subfigure}
    \hspace{.5 cm}
    \begin{subfigure}{0.2\textwidth}
        \includegraphics{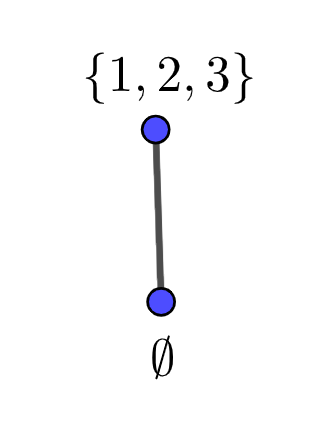}
        \caption{$\L(U_{1,3})$}
        \label{fig:B_31}          
    \end{subfigure}
\caption{}
\label{fig:uniform}
\end{figure}
\end{exa}

Let $q>1$ be a prime power. 
The \emph{$q$-uniform matroid}, which we refer to as $U_{r,n}(q)$, is a natural $q-$analog of $U_{r,n}$. 
It is a matroid whose ground set is the vector space $\mathbb{F}_q^n$ over the finite field $\mathbb{F}_q$. 
Its lattice of flats $\L(U_{r,n}(q))$ consists of subspaces $V\le\mathbb{F}_q^n$ of $\dim V\le r-1$ and the maximal subspace $\mathbb{F}_q^n$. 
It will be convenient to define $U_{r,n}(1)\coloneqq U_{r,n}$.
In particular, the lattice of flats of $U_{n,n}(q)$ is the subspace lattice of $\mathbb{F}_q^n$, also known as the $q$-Boolean lattice $B_n(q)$, which is a natural $q$-analog of the Boolean lattice $B_n$. 

\subsection{Chow rings and augmented Chow rings of matroids} \label{subsec:ChowRings}

In the following, we assume $M$ to be a loopless matroid.
\begin{defi}
Let $S$ be the polynomial ring $\mathbb{Q}[x_F:F\in\L(M)\setminus\{\emptyset\}]$. The \emph{Chow ring} $A(M)$ of a matroid $M$ is a quotient $\mathbb{Q}$-algebra
\[
    A(M)\coloneqq S/(I+J)
\]
where $I$ and $J$ are the following ideals of $S$:
\[
    I=\left\langle x_F x_{F'}: F, F'\text{ are incomparable in }\L(M)\right\rangle \quad \text{and} \quad     J=\left\langle \sum_{F:F\ni i}x_F: i\in E\right\rangle.
\]
\end{defi}
This Chow ring was introduced and studied by Feichtner and Yuzvinsky \cite{FeichtnerYuzvinsky2004}, who defined a Chow ring for any atomic lattice $\L$ equipped with a ``building set" $\G\subset\L$. (The Chow ring $A(M)$ is the Chow ring of the lattice $\L(M)$ of flats with the maximal building set.) In particular, they found a Gr\"{o}bner basis for the ideal $I+J$, thereby obtaining a basis for $A(M)$ consisting of the standard monomials, which we refer to as the \emph{Feichtner--Yuzvinsky basis} (FY-basis).

\begin{thm} \cite[Corollary 1]{FeichtnerYuzvinsky2004} \label{thm:FYChowBasis}
The following set of monomials forms a basis for $A(M)$
\begin{equation} \label{eq:FYChowBasis}
    FY(M)\coloneqq\left\{x_{F_1}^{a_1}x_{F_2}^{a_2}\ldots x_{F_\ell}^{a_\ell}:\substack{~\emptyset=F_0\subsetneq F_1\subsetneq F_2\subsetneq\ldots\subsetneq F_\ell \text{ for any } \ell \\
    1\le a_i\le \rk_M(F_i)-\rk_M(F_{i-1})-1}\right\}.
\end{equation}    
\end{thm}
We write $FY^j(M)$ as the subset of the degree $j$ monomials in $FY(M)$. 

To prove the Dowling--Wilson conjecture, Braden, Huh, Matherne, Proudfoot, and Wang \cite{BHMPW2020,BHMPW2020+} introduced an extension of the Chow ring of a matroid called the \emph{augmented Chow ring}.
\begin{defi} \cite{BHMPW2020, BHMPW2020+}
Let $\widetilde{S}$ be the polynomial ring $\mathbb{Q}\left[\{x_F\}_{F\in\L(M) \setminus\{E\}}\cup\{y_i\}_{i\in E}\right]$. The \emph{augmented Chow ring} of $M$ is 
\[
    \widetilde{A}(M)\coloneqq\widetilde{S}/(\widetilde{I}_1+\widetilde{I}_2+\widetilde{J}).
\]
where $\widetilde{I}_1$, $\widetilde{I}_2$, and $\widetilde{J}$ are the following ideals of $\widetilde{S}$:
\begin{itemize}
    \item $\widetilde{I}_1$ is generated by $x_F x_{F'}$ for any proper flats $F$ and $F'$.
    \item $\widetilde{I}_2$ is generated by $y_i x_F$ for any $i\in E$ and any proper flats $F$ such that $i\notin F$.
    \item $\widetilde{J}$ is generated by the linear elements $y_i-\sum_{F:i\notin F}x_F$ for all $i\in E$.
\end{itemize}
\end{defi}

In the author's previous work \cite{Liao2023One}, and independently by Eur, Huh, and Larson in \cite{EHL2022stellahedral}, it was shown that the augmented Chow ring $\widetilde{A}(M)$ can be realized as the Chow ring of the lattice of flats of the free coextension $M\times e$ of $M$ with respect to a certain building set. By \cite[Corollary 1]{FeichtnerYuzvinsky2004} the augmented Chow ring has the following monomial basis. 

\begin{thm} \cite{Liao2023One,EHL2022stellahedral} \label{thm:FYAugBasis}
The following set of monomials forms a basis for $\widetilde{A}(M)$
\begin{equation} \label{eq:AugChowBasis}
	\widetilde{FY}(M)\coloneqq\left\{x_{F_1}^{a_1}x_{F_2}^{a_2}\ldots x_{F_{\ell}}^{a_{\ell}}: \substack{\emptyset\subsetneq F_1\subsetneq F_2\subsetneq\ldots\subsetneq F_{\ell} \text{ for any }\ell \\
    1\le a_1\le\rk_M(F_1),~ a_i\le\rk_M(F_i)-\rk_M(F_{i-1})-1 \text{ for }i\ge 2}
    \right\}.
\end{equation}
\end{thm}
Denote by $\widetilde{FY}^j(M)$ the subset of $\widetilde{FY}(M)$ consisting of the monomials of degree $j$.

When $M$ is the uniform matroid $U_{r,n}$, we can consider $\S_n$ acting on the polynomial ring $S$ and $\widetilde{S}$ by permuting the variables $x_F$ which are indexed by subsets $F$ of $[n]$ and $y_i$ for $i\in [n]$. 
Both of the ideals $I+J$ of $S$ and $\widetilde{I}_1+\widetilde{I}_2+\widetilde{J}$ of $\widetilde{S}$ are generated by homogeneous polynomials, and are invariant under the action of $\S_n$. 
Hence the Chow rings $A(U_{r,n})$ and $\widetilde{A}(U_{r,n})$ and each of their degree components form $\S_n$-modules. 
Furthermore, the bases $FY(U_{r,n})$ in (\ref{eq:FYChowBasis}) and $\widetilde{FY}(U_{r,n})$ in (\ref{eq:AugChowBasis}) are permutation bases for the $\S_n$-modules $A(U_{r,n})$ and $\widetilde{A}(U_{r,n})$, respectively. 
Similarly, when $M$ is the $q$-uniform matroid $U_{r,n}(q)$, the rings $A(U_{r,n}(q))$, $\widetilde{A}(U_{r,n}(q))$ are $GL_n(\mathbb{F}_q)$-modules with permutation bases $FY(U_{r,n}(q))$, $\widetilde{FY}(U_{r,n}(q))$ respectively.

\subsection{Charney--Davis quantity and its equivariant extension} \label{subsec:CD}
Let $\mathbb{F}$ be any field and $A=\bigoplus_{j\in\mathbb{N}}A^j$ be an $\mathbb{N}$-graded $\mathbb{F}$-algebra such that each degree-$j$ homogeneous component $A^j$ ($j\in\mathbb{N}$) satisfies $\dim_{\mathbb{F}}A^j<\infty$.  
Then the Hilbert series of $A$ is defined to be 
\[
    \Hilb(A,t)\coloneqq \sum_{j\in\mathbb{N}}\dim_{\mathbb{F}}(A^j)t^j.
\]

We say that the Hilbert series of $A$ is \emph{palindromic}, if there is some $n\ge 0$ such that $\dim_{\mathbb{F}}(A^N)=0$ for $N>n$ and $\Hilb(A,t)=\sum_{j=0}^n a_jt^j$ with $a_j=a_{n-j}$ for $j\le n/2$. When the Hilbert series of $A$ is a polynomial of degree $n$, we call the number
\[
    CD(A)\coloneqq
    \begin{cases}
        (-1)^{\frac{n}{2}}\Hilb(A,-1) & \text{ if $n$ is even}\\
        \Hilb(A,-1) & \text{ if $n$ is odd}
    \end{cases}
\]
the \emph{Charney--Davis quantity} of $A$. In particular, if $A$ has a palindromic Hilbert series of odd degree, then $CD(A)=0$.

This quantity was first introduced by Charney and Davis in \cite{CharneyDavis1995} in connection with the local form of the Hopf conjecture for piecewise Euclidean cubical manifolds. 
The conjecture has an equivalent form stating that the $h$-polynomial $h_{\Delta}(t)$ of a simplicial $(2n-1)$-sphere $\Delta$ satisfies $(-1)^nh_{\Delta}(-1)\ge 0$ if $\Delta$ is a \emph{flag} simplicial complex. 
The value $(-1)^n h_{\Delta}(-1)$ is now called the Charney--Davis quantity of $\Delta$, and the conjecture itself is known as the \emph{Charney--Davis conjecture}. 
For further details, see \cite{Zheng2019face} and \cite[Section 3]{AthanasiadisGamma}.

The Chow ring $A(M)$ and the augmented Chow ring $\widetilde{A}(M)$ admit alternative interpretations of their Charney--Davis quantities.  
For a matroid $M$ of rank $r$, the Hilbert series of $A(M)$ and $\widetilde{A}(M)$ are palindromic of degree $r-1$ and $r$ respectively (see Remark \ref{rem:palindromic}). Consequently:
\begin{itemize}
    \item If $r$ is even, then $CD(A(M))=0$; if $r$ is odd, then $CD(A(M))$ equals the signature of the quadratic form on $A^{\frac{r-1}{2}}(M)$ (see Remark \ref{rem:QuadForm}).
    \item Likewise, if $r$ is odd, then $CD(\widetilde{A}(M))=0$; if $r$ is even, then $CD(\widetilde{A}(M))$  equals the signature of the quadratic form on $\widetilde{A}^{\frac{r}{2}}(M)$.
\end{itemize}

\begin{rem} \label{rem:palindromic}
Adiprasito, Huh, and Katz \cite{AHK2018} and Braden, Huh, Matherne, Proudfoodt, and Wang \cite{BHMPW2020+} showed that $A(M)$ and $\widetilde{A}(M)$ satisfy the so-called \emph{K\"{a}hler package}, which includes Poincar\'{e} duality, Hard Lefschetz theorem, and Hodge-Riemann relations. 
In particular, Poincar\'{e} duality for $A(M)$ asserts that for each $0\le i\le r-1$ the multiplication map 
\[
    A^i(M)\times A^{r-1-i}(M)\longrightarrow A^{r-1}(M)
\]
induce an isomorphism 
\[
    A^{r-1-i}(M)\longrightarrow \Hom_{\mathbb{Q}}(A^i(M),A^{r-1}(M)).
\]
Composing this with a degree map $\deg: A^{r-1}(M)\longrightarrow\mathbb{Q}$ gives $A^{r-1-i}(M)\cong\Hom_{\mathbb{Q}}(A^i(M),\mathbb{Q})$, so $\dim_{\mathbb{Q}}(A^i(M))=\dim_{\mathbb{Q}}(A^{r-1-i}(M))$ for all $i$. 
Hence, the Hilbert series of $A(M)$ is palindromic, and the same argument applies to $\widetilde{A}(M)$.
\end{rem}

\begin{rem} \label{rem:QuadForm}
In the non-augmented case, if $r$ is odd, the multiplication map 
\[
    Q:A^{\frac{r-1}{2}}(M)\times A^{\frac{r-1}{2}}(M)\longrightarrow A^{r-1}(M)\cong\mathbb{Q}, \qquad Q(x)=x^2
\]
defines a quadratic form 
on $A^{\frac{r-1}{2}}(M)$ whose signature, by the the Hodge-Riemann relation for $A(M)$ (cf. \cite[proof of Theorem 1.1]{LeungReinerToric2002}), is equal to the Charney--Davis quantity of $A(M)$. 
Similarly, in the augmented case, when $r$ is even, the analogous form on $\widetilde{A}^{\frac{r}{2}}(M)$ has signature equal to the Charney–Davis quantity of $\widetilde{A}(M)$.
\end{rem}

Suppose now $\mathbb{F}$ is the field of complex numbers $\mathbb{C}$ and the graded algebra $A=\bigoplus_{i=0}^n A^i$ further carries a graded $\mathbb{C}G$-module structure for a finite group $G$. Let $R_{\mathbb{C}}(G)$ be the representation ring (or virtual character ring) of $G$ and the $G$-equivariant Hilbert series $\Hilb^G(A,t)=\sum_{i=0}^n A^it^i$ be a polynomial in $R_{\mathbb{C}}(G)[t]$. Then we may define the \emph{$G$-equivariant Charney--Davis quantity} of $A$ as 
\[
    CD^G(A)\coloneqq\begin{cases}
        (-1)^{\frac{n}{2}}\Hilb^G(A,-1) & \text{ if $n$ is even}\\
        \Hilb^G(A,-1)  & \text{ if $n$ is odd}
    \end{cases}.
\]
The resulting quanitiy $CD^G(A)$ lies in $R_{\mathbb{C}}(G)$. We say that $CD^G(A)$ is \emph{nonnegative} if it is from a representation of $G$. One can view $CD^G(A)$ as an alternating sum of the corresponding characters of $G$, and it is nonnegative if the result of the alternating sum indeed gives a character of a representation of $G$.

\subsection{Symmetric functions and representations of symmetric groups} \label{subsec:SymRep}

We assume some familiarity with symmetric functions and the representation theory of the symmetric group $\S_n$. 
Below, we briefly review how the \emph{ring of symmetric functions} $\Lambda$ encodes the representations of symmetric groups; see Sagan \cite{Sagan2013symmetric} and Stanley \cite{StanleyEC2} for references. 

Let $\Lambda_{\mathbb{Z}}$ be the ring of symmetric functions of bounded degrees with infinite variables $\x=(x_1,x_2,x_3,\ldots)$. 
It can be viewed as the polynomial ring $\mathbb{Z}[e_1,e_2,e_3,\ldots]=\mathbb{Z}[h_1,h_2,h_3,\ldots]$ where $e_n$ and $h_n$ are the \emph{elementary symmetric function} and the \emph{complete homogeneous symmetric function} of degree $n$ respectively. 
The ring $\Lambda_{\mathbb{Z}}=\bigoplus_{i\ge 0}\Lambda_i$, where $\Lambda_i$ is the space of symmetric functions homogeneous of degree $i$, forms a graded $\mathbb{Z}$-algebra via the usual multiplication.  

For any positive integer $n$, let $\C(\S_n)$ be the space of virtual characters of $\S_n$, i.e. the free $\mathbb{Z}$-module generated by the irreducible characters $\chi^{\lambda}$ of $\S_n$ for any partition $\lambda$ of $n$, denoted by $\lambda\vdash n$. The irreducible representation corresponding to the character $\chi^\lambda$ is called the \emph{Specht module $S^\lambda$} for any partition $\lambda$. 
Then the direct sum 
\[
    \C(\S)\coloneqq\bigoplus_{n\ge 0}\C(\S_n)
\]
forms a graded $\mathbb{Z}$-algebra under the \emph{induction product} defined by 
\begin{align*}
     *:\C(\S_{n_1})\times \C(\S_{n_2}) &\longrightarrow \C(\S_{n_1+n_2})\\
        (f, g) &\longmapsto f*g\coloneqq(f\otimes g)\uparrow_{\S_{n_1}\times\S_{n_2}}^{\S_{n_1+n_2}}
\end{align*}
where for a subgroup $H$ of $G$ the operation $(-)\uparrow_{H}^{G}$ is the usual induction from the class functions of $H$ to the class functions of $G$.

There is a $\mathbb{Z}$-algebra isomorphism between $\C(\S)$ and $\Lambda_{\mathbb{Z}}$ called the \emph{Frobenius characteristic map} $\ch$ such that
\begin{align*}
    \ch: \C(\S) &\longrightarrow \Lambda_{\mathbb{Z}}\\
    \mathbf{1}_{\S_n} & \longmapsto h_n\\
        {\rm sgn}_{\S_n}  & \longmapsto e_n\\
            {\chi}^{\lambda} & \longmapsto s_{\lambda}
\end{align*}
where $\mathbf{1}_{\S_n}$, ${\rm sgn}_{\S_n}$ are the trivial and signed characters of $\S_n$ respectively, and $s_{\lambda}$ is the \emph{Schur function} associated to $\lambda$. 
To simplify the notations, we sometimes denote the representation by the corresponding character. 
For a composition $\nu=(\nu_1,\nu_2,\ldots,\nu_{\ell})$ of $n$, let $\S_{\nu}$ be the corresponding Young subgroup. 
Then the $\S_n$-permutation module whose pointwise-stabilizer is $\S_{\nu}$ is isomorphic to the induced representation $\mathbf{1}_{\S_{\nu}}\uparrow_{\S_{\nu}}^{\S_n}$ and has the Frobenius characteristic
\[
    \ch(\mathbf{1}_{\S_{\nu}}\uparrow_{\S_{\nu}}^{\S_n})=\ch(1_{\S_{\nu_1}}*1_{\S_{\nu_2}}*\ldots*1_{\S_{\nu_{\ell}}})=h_{\nu_{1}}h_{\nu_{2}}\ldots h_{\nu_{\ell}}\eqqcolon h_{\nu}.
\]
Furthermore, the dimension of the permutation module is given by
\[
    \dim(\mathbf{1}_{\S_{\nu}}\uparrow_{\S_{\nu}}^{\S_n})=\dim(\mathbb{C}[\S_n/\S_{\nu}])=|\S_n:\S_{\nu}|=\frac{n!}{\nu_1!\ldots\nu_{\ell}!}=\binom{n}{\nu_1,\ldots,\nu_{\ell}}.
\]

Let $V=\bigoplus_i V^i$ be a graded $\mathbb{C}[\S_n]$-module. The \emph{graded Frobenius series} of $V$ is defined to be $\grFrob(V,t)\coloneqq \sum_{i}\ch(V^i)t^i$. In this way, one can encode representations of $\S_n$ in terms of symmetric functions via $\ch$ and this will be the method we apply to study the representations of $\S_n$ in this paper.

\subsection{Eulerian polynomials for derangements, permutations, and decorated permutations and their extensions} \label{subsec:Permutations}
\subsubsection{Eulerian story} \label{subsubsec:Eulerian}
For any permutation $\sigma=\sigma_1\sigma_2\ldots\sigma_n$ in $\S_n$, one can consider the following statistics:
\begin{itemize}
    \item \emph{Excedance set} $\EXC(\sigma)\coloneqq\{i\in[n-1]: \sigma_i>i\}$ and \emph{excedance number} $\exc(\sigma)\coloneqq |\EXC(\sigma)|$.
    \item \emph{Descent set} $\DES(\sigma)\coloneqq\{i\in[n-1]: \sigma_i> \sigma_{i+1}\}$ and \emph{descent number} $\des(\sigma)\coloneqq |\DES(\sigma)|$.
    \item \emph{Inversion number} $\inv(\sigma)\coloneqq |\{(i,j)\in[n]\times[n]: i<j,~\sigma_i> \sigma_j\}|$.
    \item \emph{Major index} $\maj(\sigma)\coloneqq \sum_{i\in\DES(\sigma)}i$.
\end{itemize}

It is well known that $\exc$ and $\des$ are equidistributed statistics on $\S_n$ and belong to the class of \emph{Eulerian statistics}. The generating function of their distribution 
\begin{equation} \label{eq:DefEulPoly}
    A_n(t)\coloneqq \sum_{\sigma\in\S_n}t^{\des(\sigma)}=\sum_{\sigma\in\S_n} t^{\exc(\sigma)}
\end{equation}
is known as the \emph{Eulerian polynomial} and its coefficients are called the \emph{Eulerian numbers}. 
Recall that $D_n$ is the subset of derangements in $\S_n$, i.e., permutations without fixed points.
Similarly, we can consider the analogous Eulerian polynomial for derangements
\[
    d_n(t)\coloneqq \sum_{\sigma\in D_n}t^{\exc(\sigma)}.
\]
Led by their study on Rees product of posets, Shareshian and Wachs \cite{ShareshianWachs2010} studied certain $q$-analogs and symmetric function analogs of the Eulerian polynomials $A_n(t)$ and $d_n(t)$, which we will introduce in the following.

\begin{defi}
Let $[\overline{n}]\coloneqq\{\overline{1},\overline{2},\ldots,\overline{n}\}$ and assign to $[n]\cup[\overline{n}]$ the total order $\overline{1}<\ldots<\overline{n}<1<\ldots<n$. Let $\overline{\sigma}=\overline{\sigma}_1\ldots\overline{\sigma}_n$ be a word over $[n]\cup[\overline{n}]$ obtained from $\sigma$ by letting $\overline{\sigma}_i=\overline{j}$ if $\sigma_i=j$ and $i\in\EXC(\sigma)$, letting $\overline{\sigma}_i=\sigma_i$ if $i\notin\EXC(\sigma)$. Define the set statistic
\[
	\DEX(\sigma)\coloneqq\DES(\overline{\sigma}).
\]    
\end{defi}
\noindent The statistic $\DEX$ is a variant of descent set $\DES$ and has intriguing properties that connect the classical statistics $\maj$ and $\exc$. 

\begin{thm} \label{thm:DEXPerm} \cite[Lemma 2.2]{ShareshianWachs2010} 
For all $\sigma \in \S_n$, 
\[
    \sum_{i\in\DEX(\sigma)}i=\maj(\sigma)-\exc(\sigma)
\]
and 
\[
    |\DEX(\sigma)|=\begin{cases}
        \des(\sigma)  & \text{ if }\sigma(1)=1\\
        \des(\sigma)-1 &  \text{ if }\sigma(1)\neq 1
    \end{cases}.
\]
\end{thm}

For an integer $n\ge 1$ and $S\subseteq[n-1]$, define Gessel's \emph{fundamental quasisymmetric function}
\[
    F_{S,n}(\x)\coloneqq \sum_{\substack{i_1\ge i_2\ge \ldots\ge i_n\\ j\in S\Rightarrow i_j>i_{j+1}}}x_{i_1}x_{i_2}\ldots x_{i_n}
\]
and $F_{\phi,0}\coloneqq 1$. In particular, when $S=\emptyset$, $F_{\emptyset, n}=h_n$; when $S=[n-1]$, $F_{[n-1],n}=e_n$. 

\begin{defi}\label{def:EulerianQuasi}
For integer $n\ge 1$, define 
\[
    Q_n(\x,t)=\sum_{\sigma\in\S_n}F_{\DEX(\sigma),n}(\x)t^{\exc(\sigma)}  
\]
and 
\[
    Q_n^0(\x,t)=\sum_{\sigma\in D_n}F_{\DEX(\sigma),n}(\x)t^{\exc(\sigma)}.
\]
It is useful to set $Q_0(\x,t)\coloneqq 1$. For $0\le j\le n-1$, we denote by $Q_{n,j}(\x)$ (resp. $Q^0_{n,j}(\x)$) the quasisymmetric function coefficient of $t^j$ in $Q_n(\x,t)$ (resp. $Q^0_n(\x,t)$. 
\end{defi}

Shareshian and Wachs showed that $Q_{n,j}(\x)$ and $Q^0_{n,j}(\x)$ are symmetric functions for all~ $j$.
The polynomials $Q_n(\x,t)$ and $Q^0_n(\x,t)$ can be viewed as symmetric-function analogs of the Eulerian polynomials $A_n(t)$ and $d_n(t)$, respectively. 
The functions $Q_{n,j}(\x)$ and $Q^0_{n,j}(\x)$ are the symmetric-function analogs of the Eulerian numbers and were referred to as the \emph{Eulerian quasisymmetric function} in \cite{ShareshianWachs2010}.

%In fact, in \cite{ShareshianWachs2010}, they show that the sum is still symmetric even if we further refine it over fixed $\exc$ and fixed cycle type of permutations in $\S_n$.

The symmetric function $Q_{n,j}(\x)$ is $h$-positive (has positive coefficients when expanded as a linear combination of $\{h_{\lambda}\}_{\lambda\vdash n}$) and appears as the Frobenius characteristic of several natural $\S_n$-modules, including the cohomology of the \emph{permutahedral variety} \cite{Stanley1989,Stembridge1992,Stembridge1994}, the Chow ring of the Boolean matroid $\mathsf{B}_n=U_{n,n}$, which is isomorphic (as a ring) to that cohomology \cite{Liao2023FPSAC, Liao2023One}, and the poset homology of Rees product of the Boolean lattice with certain tree-like posets \cite{ShareshianWachs2007,ShareshianWachs2012Rees,Athanasiadis2020Rees}. 
In particular, identifying $Q_{n,j}(\x)$ with the Frobenius characteristic of the permutation representation on the Chow ring $A(\mathsf{B}_n)$ yields the following theorem. 
\begin{thm}\label{thm:FY^jPermbasis} \cite[Proposition 3.5, Theorem 3.6]{Liao2023One} 
For $0\le j\le n-1$, the degree $j$ component of the Chow ring $A^j(\mathsf{B}_n)=\mathbb{C}\S_n FY^j(\mathsf{B}_n)$ is a permutation representation of $\S_n$ and
\[
    \ch(\mathbb{C}\S_n FY^j(\mathsf{B}_n))=Q_{n,j}(\x).
\]
\end{thm}

The \emph{stable principal specialization} of a (quasi)symmetric function $f(\x)$ is defined by $\ps(f)\coloneqq f(1,q,q^2,\ldots)$. The following lemma is implicit in the proof of Lemma 5.2 in \cite{Gessel1993cycle}.

\begin{lem}\label{Lem:psF} 
For all $n\ge 1$, $S\subseteq [n-1]$, we have 
\[
    \ps(F_{S,n})=\frac{q^{\sum_{i\in S}i}}{(1-q)(1-q^2)\ldots(1-q^n)}.
\]
\end{lem}
It follows from Lemma \ref{Lem:psF} that Theorem \ref{thm:DEXPerm} implies the following corollary.

\begin{cor} \label{cor:SumDEX} 
For all $\sigma\in\S_n$, 
\[
    \ps(F_{\DEX(\sigma),n})=\frac{q^{\maj(\sigma)-\exc(\sigma)}}{(1-q)(1-q^2)\ldots(1-q^n)}.
\]
\end{cor}

By Corollary \ref{cor:SumDEX}, the stable principal specialization of $Q_n(\x,t)$ and $Q_n^0(\x,t)$ gives the following interesting $q$-analogs of the Eulerian polynomials $A_n(t)$ and $d_n(t)$. 

\begin{thm}\label{Thm:psEulerianQ}
For all $n\ge 1$, 
\begin{enumerate}
    \item[(a)] $\prod_{i=1}^n\left(1-q^i\right)\ps(Q_n(\mathbf{x},t))=\sum_{\sigma\in\S_n}q^{\maj(\sigma)-\exc(\sigma)}t^{\exc(\sigma)}$,
    \item[(b)] $\prod_{i=1}^n\left(1-q^i\right)\ps(Q_n^0(\mathbf{x},t))=\sum_{\sigma\in D_n}q^{\maj(\sigma)-\exc(\sigma)}t^{\exc(\sigma)}$.
\end{enumerate}
\end{thm}
\noindent
Throughout this paper, we denote  
\begin{equation}\label{qEulerian}
	A_n(q,t)\coloneqq\sum_{\sigma\in\S_n}q^{\maj(\sigma)-\exc(\sigma)}t^{\exc(\sigma)},
\end{equation}
\[
    d_n(q,t)\coloneqq\sum_{\sigma\in D_n}q^{\maj(\sigma)-\exc(\sigma)}t^{\exc(\sigma)}.
\]

\subsubsection{Binomial Eulerian story} \label{subsubsec:BinomialEulerian}
The \emph{binomial Eulerian polynomial}
\[
  \widetilde{A}_n(t)\;\coloneqq\;1 \;+\; t\sum_{k=1}^n \binom{n}{k} A_k(t),
\]
named by Shareshian and Wachs \cite{ShareshianWachs2020}, shares many interesting properties with the classical Eulerian polynomial $A_n(t)$.  In particular, it is palindromic and also arises as the $h$-polynomial of a simple polytope called the \emph{stellohedron}. This polytopal interpretation was first uncovered by Postnikov, Reiner, and Williams \cite{PRW2008}. The palindromicity of $\widetilde{A}_n(t)$ is equivalent to an intriguing symmetrical Eulerian identity studied by Chung, Graham, and Knuth \cite{CGKEulerianSum2011}.  

A natural $q$-analog and a symmetric‑function analogue of $\widetilde{A}_n(t)$ were introduced by Shareshian and Wachs \cite{ShareshianWachs2020}:
\begin{equation}
    \widetilde{A}_n(q,t)\coloneqq 1+t\sum_{k=1}^n\qbin{n}{k}A_k(q,t),
\end{equation}
\begin{equation} \label{defBinomQ}
	\widetilde{Q}_n(\x,t)\coloneqq h_n(\x)+t\sum_{k=1}^n h_{n-k}(\x)Q_k(\x,t).
\end{equation}
Just as $Q_n(\x,t)$ is the graded Frobenius series of the natural $\S_n$-representation on the cohomology ring of the permutahedral variety, Shareshian and Wachs proved that $\widetilde{Q}_n(\x,t)$ is the graded Frobenius series of the cohomology of the stellahedral variety, which, by definition (\ref{defBinomQ}), carries a permutation representation of $\S_n$. 
This cohomology ring is isomorphic to the augmented Chow ring of the Boolean matroid. In particular, one obtains the following.

\begin{thm} \cite[Theorems 4.6, 4.8]{Liao2023One}\label{thm:TildeFY^jPermbasis} 
For $0\le j\le n$, the degree $j$ component of the augmented Chow ring $\widetilde{A}^j(\mathsf{B}_n)=\mathbb{C}\S_n \widetilde{FY}^j(\mathsf{B}_n)$ is a permutation representation of $\S_n$ and
\[
    \ch(\mathbb{C}\S_n \widetilde{FY}^j(\mathsf{B}_n))=\widetilde{Q}_{n,j}(\x),
\]
where $\widetilde{Q}_{n,j}(\x)$ is the coefficient of $t^j$ of $\widetilde{Q}_n(\x,t)$.
\end{thm}

Shareshian and Wachs further showed that $\widetilde{A}_n(q,t)$ is the appropriate $q$-analog of $\widetilde{A}_n(t)$ to consider:
\begin{equation}  \label{eq:PSBinomialQ}
    \prod_{i=1}^n(1-q^i)\ps(\widetilde{Q}_n(\x,t))=\widetilde{A}_n(q,t).
\end{equation}
As an analog of the set of permutations $\mathfrak{S}_n$ in the Eulerian story, the author \cite{Liao2023One} found that Postnikov's \emph{decorated permutations} give a nice combinatorial model to interpret the binomial Eulerian story. 
A \emph{decorated permutation} of $[n]$ can be viewed as a permutation on a subset of $[n]$.
We denote by $\widetilde{\mathfrak{S}}_n$ the set of decorated permutations of $[n]$.
\begin{exa}
For example, $215\in\widetilde{\mathfrak{S}_5}$ is a permutation on $\{1,2,5\}\subset[5]$ that maps $1$ to $2$, $2$ to $1$, $5$ to $5$. We can express it in \emph{two-line notation} and in \emph{one-line notation} respectively as the following 
\[
	215=\left(\begin{array}{ccccc}
		1 & 2 & 3 & 4 & 5\\
		2 & 1 & 0 & 0 & 5
	\end{array}\right)=21005.
\]
For each $n$, let $\theta$ be the permutation on $\emptyset$ in $\widetilde{\mathfrak{S}_n}$. In particular when $n=5$, it has the following two-line notation and one-line notation
\[
	\theta\coloneqq\left(\begin{array}{ccccc}
		1 & 2 & 3 & 4 & 5\\
		0 & 0 & 0 & 0 & 0
	\end{array}\right)=00000.
\]
\end{exa}
\begin{rem}
 Our definition of decorated permutations is different from Postnikov's in \cite{Postnikov2006Positroid} where he introduced them as permutations with two kinds of fixed points. An easy bijection is given by replacing the $0$s in our one-line notation with the second kind of fixed points.
\end{rem}
Extend the set statistic $\DEX$ over $\S_n$ to $\widetilde{\S}_n$ in the following way.

\begin{defi} \label{def:DEXext} \cite[Definition 6.19]{Liao2023One}
Consider the set $[n]\cup[\overline{n}]\cup\{0\}$ with the total order $\overline{1}<\ldots<\overline{n}<0<1<\ldots<n$. For $\sigma\in\widetilde{\S}_n\setminus\{\theta\}$, let $\overline{\sigma}$ be a word over $[n]\cup[\overline{n}]\cup\{0\}$ obtained from $\sigma$ by replacing $\sigma_i$ with $\overline{\sigma_i}$ whenever $i\in\EXC(\sigma)\coloneqq\{i\in[n]:i<\sigma_i\}$. Define the set statistic
\[
    \DEX(\sigma)\coloneqq\begin{cases}
        \DES(\overline{\sigma}) & \mbox{ if }\sigma\neq \theta\\
        \emptyset & \mbox{ if }\sigma=\theta 
    \end{cases}.
\]
\end{defi}

Similarly, the definition of the \emph{excedance number} and the \emph{major index} can be extended to $\sigma\in\widetilde{\S}_n$ as
\[
	\exc(\sigma)\coloneqq\begin{cases}
	|\{i\in[n]: i<\sigma_i\}| & \text{ if }\sigma\neq\theta\\
	-1  & \text{ if }\sigma=\theta
	\end{cases} ~\text{ and }~
	\maj(\sigma)\coloneqq\begin{cases}
	\sum_{i:\sigma_i<\sigma_{i+1}} i & \text{ if }\sigma\neq\theta\\
	-1  & \text{ if }\sigma=\theta
	\end{cases}.
\]
Then the extension of $\DEX$ to $\widetilde{\S}_n$ still preserves the nice property it has as a statistic over $\S_n$. The following lemma is analogous to Theorem \ref{thm:DEXPerm}.
\begin{prop} \label{Prop:ExtDEXSum} \cite[Lemma 6.20]{Liao2023One}
For $\sigma\in\widetilde{\S}_n$, 
\[
    \sum_{i\in\DEX(\sigma)}i=\maj(\sigma)-\exc(\sigma).
\]
And for $\sigma\in\widetilde{\S}_n\setminus\{\theta\}$,
\[
    |\DEX(\sigma)|=\begin{cases}
        \des(\sigma) & \mbox{ if }\sigma(1)=0 \text{ or }1\\
        \des(\sigma)-1 & \mbox{ if }\sigma(1)>1
    \end{cases}.
\]
\end{prop}
Liao used the extension of $\DEX$ to $\widetilde{\S}_n$ to obtain the $F$-expansion of $\widetilde{Q}_n(\x,t)$ analogous to Definition \ref{def:EulerianQuasi}.

\begin{thm} \label{thm:FexpWideQ} \cite[Theorem 6.21]{Liao2023One}
For any integer $n\ge 1$, 
\[
	\widetilde{Q}_n(\x,t)=\sum_{\sigma\in\widetilde{\mathfrak{S}}_n}F_{\DEX(\sigma),n}(\x)t^{\exc(\sigma)+1}.
\]
Or equivalently, for all $0\le j\le n$,
\[
    \widetilde{Q}_{n,j}(\x)=\sum_{\substack{\sigma\in\widetilde{\S}_n\\ \exc(\sigma)=j-1}} F_{\DEX(\sigma),n}(\x).
\]
\end{thm}
Combining Lemma \ref{Lem:psF}, (\ref{eq:PSBinomialQ}) and Proposition \ref{Prop:ExtDEXSum} gives the following Theorem analogous to Theorem \ref{Thm:psEulerianQ}.
\begin{thm} \label{thm:QDecoratedPerm}
For all integer $n\ge 1$, 
\[
    \prod_{i=1}^n(1-q^i)\ps(\widetilde{Q}_n(\x,t))=\sum_{\sigma\in\widetilde{\mathfrak{S}}_n}q^{\maj(\sigma)-\exc(\sigma)}t^{\exc(\sigma)+1}.
\]
\end{thm}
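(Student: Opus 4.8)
The plan is to derive the identity by pushing the stable principal specialization $\ps$ through the fundamental quasisymmetric expansion of $\widetilde{Q}_n(\x,t)$ supplied by Theorem~\ref{thm:FexpWideQ}, and then collapsing each term with Gessel's evaluation of $\ps(F_{S,n})$ (Lemma~\ref{Lem:psF}) together with the $\DEX$-sum identity for decorated permutations (Proposition~\ref{Prop:ExtDEXSum}). This mirrors exactly how Theorem~\ref{Thm:psEulerianQ} is obtained in the non-augmented setting from the $F$-expansion of $Q_n$ (Definition~\ref{def:EulerianQuasi}), Lemma~\ref{Lem:psF}, and Theorem~\ref{thm:DEXPerm}.

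First I would record that $\widetilde{\S}_n$ is finite and that $\DEX(\sigma)\subseteq[n-1]$ for every $\sigma\in\widetilde{\S}_n$ — indeed $\DEX(\sigma)=\DES(\overline{\sigma})$ for a length-$n$ word when $\sigma\neq\theta$, and $\DEX(\theta)=\emptyset$ — so that Lemma~\ref{Lem:psF} applies to every summand, with $F_{\emptyset,n}=h_n$ taking care of $\sigma=\theta$. Then, using that $\ps$ is a ring homomorphism on symmetric functions of bounded degree, I would apply it coefficient-wise in $t$ to Theorem~\ref{thm:FexpWideQ} and substitute Lemma~\ref{Lem:psF}:
\[
    \ps(\widetilde{Q}_n(\x,t))=\sum_{\sigma\in\widetilde{\S}_n}\ps\bigl(F_{\DEX(\sigma),n}\bigr)\,t^{\exc(\sigma)+1}
    =\frac{1}{\prod_{i=1}^n(1-q^i)}\sum_{\sigma\in\widetilde{\S}_n}q^{\sum_{i\in\DEX(\sigma)}i}\,t^{\exc(\sigma)+1}.
\]
Multiplying through by $\prod_{i=1}^n(1-q^i)$ and replacing $\sum_{i\in\DEX(\sigma)}i$ by $\maj(\sigma)-\exc(\sigma)$ via Proposition~\ref{Prop:ExtDEXSum} — which also holds for $\sigma=\theta$, both sides vanishing under the conventions $\maj(\theta)=\exc(\theta)=-1$ — yields the claimed formula. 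As a by-product, comparison with (\ref{eq:PSBinomialQ}) gives the decorated-permutation analog $\widetilde{A}_n(q,t)=\sum_{\sigma\in\widetilde{\S}_n}q^{\maj(\sigma)-\exc(\sigma)}t^{\exc(\sigma)+1}$ of (\ref{qEulerian}).

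Since all the machinery is already in place, there is no real obstacle: the proof is essentially a bookkeeping check, and the only subtleties are boundary ones — confirming $\DEX(\sigma)\subseteq[n-1]$ so that Lemma~\ref{Lem:psF} applies verbatim, and checking that the $\theta$-term contributes the constant $1$ consistently on both sides (its $t$-exponent is $\exc(\theta)+1=0$, its $q$-exponent is $\maj(\theta)-\exc(\theta)=0$, matching $\ps(h_n)\prod_{i=1}^n(1-q^i)=1$). A heavier alternative that sidesteps Theorem~\ref{thm:FexpWideQ} would be to start from (\ref{eq:PSBinomialQ}), expand $\widetilde{A}_n(q,t)=1+t\sum_{k=1}^n\qbin{n}{k}A_k(q,t)$ via (\ref{qEulerian}), and construct a bijection sending $\sigma\in\widetilde{\S}_n$ supported on a $k$-subset $S\subseteq[n]$ to the pair consisting of $S$ and the standardization $\tau\in\S_k$ of $\sigma$; there the genuinely nontrivial point would be to show that summing $q^{\maj(\sigma)-\exc(\sigma)}$ over the $\binom{n}{k}$ choices of $S$ (for fixed $\tau$) produces $\qbin{n}{k}\,q^{\maj(\tau)-\exc(\tau)}$. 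I would take the first route.
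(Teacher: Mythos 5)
Your proof is correct and follows the paper's intended route exactly: apply $\ps$ to the $F$-expansion of $\widetilde{Q}_n(\x,t)$ from Theorem~\ref{thm:FexpWideQ}, evaluate each term with Lemma~\ref{Lem:psF}, and convert $\sum_{i\in\DEX(\sigma)}i$ to $\maj(\sigma)-\exc(\sigma)$ via Proposition~\ref{Prop:ExtDEXSum}. Your boundary checks (that $\DEX(\sigma)\subseteq[n-1]$ and that the $\theta$-term contributes $1$ on both sides) are a welcome bit of extra care that the paper leaves implicit.
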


We summarize the relations between the combinatorial models and their enumerators lifted to different extensions discussed in this subsection in Table \ref{table:relations}.

\begin{table}[h]
\[
    \begin{array}{c|ccccc}
           & \substack{\text{enumerator}\\ \exc} &  & \substack{q-\text{enumerator}\\ (\maj-\exc,\exc)} &  &  \substack{\text{equivariant enumerator}\\ (\DEX,\exc)}\\
    \hline
    D_n   &   d_n(t)     &   &   d_n(q,t)  &  &   Q_n^0(\x,t)\\
    \cap \\
    \S_n  &   A_n(t)     & \xleftarrow{q=1}   &   A_n(q,t)  & \xleftarrow{\ps} &   Q_n(\x,t)  \\
    \cap \\
    \widetilde{\S}_n  & \widetilde{A}_n(t) &  & \widetilde{A}_n(q,t)  &  & \widetilde{Q}_n(\x,t) 
    \end{array}
\]
\caption{}
\label{table:relations}
\end{table}

\subsection{Group actions on rank-selected posets} \label{subsec:RankSelection}
In this section, we recall some terminology and facts about rank-selected posets from \cite{Stanley1982GroupPoset} and \cite{WachsTool2007}. 

Let $P$ be a finite poset with a unique minimal element $\hat{0}$ and a unique maximal element $\hat{1}$. If $P$ is graded of rank $n$ with rank function $\rk_P: P\longrightarrow\{0,1,\ldots,n\}$, then for $S\subseteq [n-1]$, we can define the \emph{rank-selected subposet} of $P$ by
\[
    P_S\coloneqq\{x\in P~:~\rk_P(x)\in S\}\cup\{\hat{0},\hat{1}\}.
\]
Let $G$ be a group of automorphisms of $P$. The action of $G$ preserves the rank of the elements in $P$; hence for any $S\subseteq [n-1]$, the group $G$ permutes the maximal chains in $P_S$. Let us denote $\alpha_P(S)$ the permutation representation of $G$ generated by the maximal chains in $P_S$. Consider the virtual representation
\[
    \beta_P(S)\coloneqq\sum_{T\subseteq S}(-1)^{|S|-|T|}\alpha_P(T).
\]
By M\"{o}bius inversion, we also have
\[
    \alpha_P(S)=\sum_{T\subseteq S}\beta_P(T).
\]

\begin{thm} \cite[Theorem 1.2]{Stanley1982GroupPoset} \label{thm:BetaGenRep}
If $P$ is Cohen-Macaulay, then $\beta_P(S)$ is a genuine representation of $G$ and 
\[
    \beta_P(S)\cong_G \tilde{H}_{|S|-1}(P_S).
\]
\end{thm}
Throughout this paper, the poset $P$ we care about is always the lattice of flats of a matroid. It is well known that the lattice of flats of a matroid is always Cohen-Macaulay (see \cite[Theorem 4.1]{Folkman1966} and \cite[p.14]{BjornerGarsiaStanley1982}).

When $P$ is the Boolean lattice $B_n$ and $G$ is the symmetric group $\S_n$, the structure of representation $\beta_{B_n}(S)\cong \tilde{H}_{|S|-1}({(B_n)}_S)$ is known for any selected rank set $S\subseteq [n-1]$. Below, we follow the notations from \cite[chapter 2]{WachsTool2007}. 

Let $\lambda$, $\mu$ be two partitions such that $\mu\subseteq\lambda$ (i.e. $\mu_i\le \lambda_i$ for all $i$). The connected skew shape $\lambda/\mu$ is said to be a \emph{ribbon} (or a \emph{skew hook} or a \emph{border strip}) if two consecutive rows always overlap in exactly one cell. Such a skew shape can be described by its number of cells and its \emph{descent set}. Let $H$ be a ribbon of $n$ cells. Number the cells of $H$ from $1$ to $n$, filling left to right in each row and filling the rows from bottommost to topmost. We call a cell $i$ a \emph{descent} of $H$ if cell $i+1$ is directly above cell $i$. The descent set of $H$ is the set of all descents of $H$. If the descent set of $H$ is $R\subseteq [n-1]$, then we denote $H$ by $H_{R,n}$.

\begin{exa}
Let $n=7$. Then the diagrams of $H_{\{2,4\},7}$ and $H_{\{1,3\},7}$ are 
\[
\begin{array}{ccccc}
    H_{\{2,4\},7} &   &    &    & H_{\{1,3\},7} \\
                & &      &  &            \\
    \begin{ytableau}
    \none & \none & 5 & 6 & 7\\
    \none & 3 & *(green)4\\ 
    1     & *(green)2 
    \end{ytableau}  
    &   &     &     &
    \begin{ytableau}
    \none & 4 & 5 & 6 & 7\\
    2     & *(green)3 \\
    *(green) 1
    \end{ytableau}
\end{array}.
\]
\end{exa}
Let $S^{\lambda/\mu}$ be the \emph{Specht module} of skew shape $\lambda/\mu$. If $\lambda/\mu$ is a ribbon, the corresponding Specht module is known as a \emph{Foulkes representation}. The following result was first obtained by Solomon \cite{Solomon1968} under different terminology; see also \cite[Theorem 3.4.4]{WachsTool2007}.

\begin{thm}\label{thm:RankHomologyBn}
Let $R\subseteq [n-1]$. Then
\[
    \tilde{H}_{|R|-1}({(B_n)}_R)\cong_{\S_n} S^{H_{R,n}}.
\]    
\end{thm}

\section{Graded Frobenius series for uniform matroids $U_{r,n}$} 

\subsection{Chow rings of uniform matroids} \label{Sec:ProofFrobChow}

In this section, we prove Theorem \ref{thm:FrobChowUniform}, which are our formulas for the Frobenius series of $A(U_{r,n})_{\mathbb{C}}$. Our proof of Theorem \ref{thm:FrobChowUniform} is parallel to Hameister, Rao, and Simpson's proof of (\ref{HRS:ChowQUniform}) in \cite{HameisterRaoSimpson2021}. 
We proceed by calculating the difference of the graded Frobenius series of $A(U_{r,n})$ between two consecutive rank $r$,
\begin{align*}
    \Delta_{n,r}(\x)
    &\coloneqq\grFrob(A(U_{r+1,n})_{\mathbb{C}},t)-\grFrob(A(U_{r,n})_{\mathbb{C}},t)\\
    &=a_{n,r}^{(1)}(\x)t+a_{n,r}^{(2)}(\x)t^2+\cdots+a_{n,r}^{(r-1)}(\x)t^{r-1}+a_{n,r}^{(r)}t^{r}
\end{align*}  
for $1\le r\le n-1$. In Proposition \ref{prop:DifferenceChow}, we show that $\Delta_{n,r}(\x)$ can be expressed nicely in two different ways. Then the graded Frobenuis series of $A(U_{r,n})_{\mathbb{C}}$ can be obtained by considering either
\[
    \grFrob(A(U_{r,n})_{\mathbb{C}},t)
    =\grFrob(A(U_{1,n})_{\mathbb{C}},t)+\sum_{j=1}^{r-1}\Delta_{n,j}(\x)
\]
or 
\[
    \grFrob(A(U_{r,n})_{\mathbb{C}},t)
    =\grFrob(A(U_{n,n})_{\mathbb{C}},t)-\sum_{j=r}^{n-1}\Delta_{n,j}(\x),
\]
which with a proper choice of expression of $\Delta_{n,j}(\x)$ leads to Theorem \ref{thm:FrobChowUniform} (i) and (ii) respectively.

Before proving our main results, we collect several facts that will be useful for expressing $\Delta_{n,r}(\x)$. 
In Theorem \ref{thm:FY^jPermbasis}, we saw that the graded Frobenius series of the Chow ring of the Boolean matroid coincides with the Eulerian quasisymmetric function
\[
    \grFrob(A(\mathsf{B}_n)_{\mathbb{C}},t)=Q_n(\x,t)=\sum_{\sigma\in\S_n}F_{\DEX(\sigma),n}(\x)t^{\exc(\sigma)},
\]
whose generating function admits a closed form (\cite[Theorem 1.2]{ShareshianWachs2010}):
\begin{equation} \label{eq:GFEulQuasi}
    1+\sum_{n\ge 1}Q_{n}(\x,t)z^n=\frac{(1-t)H(z)}{H(tz)-tH(z)},
\end{equation}
where $H(z)=\sum_{n\ge 0}h_n(\x)z^n$. 
In fact, Shareshian and Wachs further refined this by tracking fixed points. 
They defined
\[
    Q_{n,j,k}(\x)\coloneqq\sum_{\substack{\sigma\in\S_n\\ \exc(\sigma)=j,~ \fix(\sigma)=k}}F_{\DEX(\sigma),n}(\x)
\]
and proved that 
\[
    1+\sum_{n\ge 1,j,k\ge 0}Q_{n,j,k}(\x)t^j r^k z^n=\frac{(1-t)H(rz)}{H(tz)-tH(z)}.
\]
We give a refinement of Theorem \ref{thm:FY^jPermbasis} regarding $Q_{n,j,k}(\x)$ that will be used to express $\Delta_{n,r}(\x)$.
Let the subset 
\begin{equation} \label{eq:FYjk}
    FY^{j,k}(\mathsf{B}_n)\coloneqq\left\{ x_{F_1}^{a_1}\ldots x_{F_\ell}^{a_\ell}\in FY^j(\mathsf{B}_n): |[n]-F_\ell|=k\right\}
\end{equation}
and $A^{j,k}(\mathsf{B}_n)$ be the subspace spanned by $FY^{j,k}(\mathsf{B}_n)$, which is obviously $\S_n$-invariant.

\begin{prop} \label{prop:RefinedZeroCodes}
For $0\le j\le n-1$ and $0\le k\le n$, we have 
\[
\ch(A^{j,k}(\mathsf{B}_n)_{\mathbb{C}})=Q_{n,j,k}(\x).
\]
\end{prop}
\begin{proof}
To prove the result, we use combinatorial objects only needed in this proof called \emph{Stembridge codes}. The readers are referred to Section 3.1 and Theorem 3.6 in \cite{Liao2023One} for their definition and an $\S_n$-equivariant bijection between Stembridge codes and $FY(\mathsf{B}_n)$. Stembridge \cite[Lemma 3.1]{Stembridge1992} showed that the set $\Code_{n,j}$ of Stembridge codes with index $j$ generated a $\mathbb{C}\S_n$-module satisfying
\[
    1+\sum_{n\ge 1}\sum_{j=0}^{n-1}\ch(\mathbb{C}\Code_{n,j}) t^j z^n=\frac{(1-t)H(z)}{H(tz)-tH(z)}.
\]
Stembridge's calculation actually extends without effort to
\[
    1+\sum_{n\ge 1 ,j,k\ge 0}\ch\left(\mathbb{C}\Code_{n,j,k}\right)t^j r^k z^n=\frac{(1-t)H(rz)}{H(tz)-tH(z)},
\]
or equivalently, $\ch(\mathbb{C}\Code_{n,j,k})=Q_{n,j,k}(\x)$, where $\Code_{n,j,k}$ is the subset of Stembridge codes of index $j$ and $k$ zeros.
Consider the $\S_n$-equivariant bijection $\phi$ between $FY(\mathsf{B}_n)$ and Stembridge codes in \cite[Theorem 3.6]{Liao2023One}. The restriction of $\phi$ to $FY^{j,k}(\mathsf{B}_n)$ gives an $\S_n$-equivariant bijection $FY^{j,k}(\mathsf{B}_n)\longrightarrow\Code_{n,j,k}$. Hence we have
\[
    \ch(A^{j,k}(\mathsf{B}_n))=\ch(\mathbb{C}\Code_{n,j,k})=Q_{n,j,k}(\x).    
\]
\end{proof}

We need the following two properties of $Q_{n,j,k}$. The first one is about the palindromicity of $Q_{n,j,k}$ and is a special case of \cite[Theorem 5.9]{ShareshianWachs2010}.

\begin{lem} \label{lem:PalinQ}
For positive integers $n$, $0\le k \le n$, and $0\le j\le n-k$,
\[
    Q_{n,j,k}(\x)=Q_{n,n-k-j,k}(\x).
\]
\end{lem}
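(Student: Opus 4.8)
The plan is to derive the palindromicity $Q_{n,j,k}(\x)=Q_{n,n-k-j,k}(\x)$ from the closed-form generating function
\[
    1+\sum_{n\ge 1,\,j,k\ge 0}Q_{n,j,k}(\x)\,t^j r^k z^n=\frac{(1-t)H(rz)}{H(tz)-tH(z)},
\]
recorded just above the statement, where $H(z)=\sum_{m\ge 0}h_m z^m$. The key observation is that for each fixed $k$, the polynomial $\sum_j Q_{n,j,k}(\x)t^j$ should be palindromic of degree $n-k$; so I would extract the coefficient of $r^k$ and show the resulting generating function in $t$ and $z$ is invariant (up to an explicit monomial factor) under $t\mapsto t^{-1}$.

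First I would isolate the $r$-dependence: writing $H(rz)=\sum_{k\ge 0}h_k r^k z^k$, the coefficient of $r^k$ on the right-hand side is $h_k z^k\cdot\frac{1-t}{H(tz)-tH(z)}$. Hence, setting $\displaystyle G(z,t)\coloneqq\frac{1-t}{H(tz)-tH(z)}=1+\sum_{m\ge 1,\,j\ge 0}Q_{m,j,0}(\x)t^j z^m$ (the $k=0$ generating function), we get for all $n>k\ge 0$ the identity $\sum_j Q_{n,j,k}(\x)t^j = h_k\cdot[z^{n-k}]\,G(z,t)$, i.e. $Q_{n,j,k}=h_k\,Q_{n-k,j,0}$ as symmetric functions. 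This reduces the claim to the case $k=0$: it suffices to prove $Q_{m,j,0}(\x)=Q_{m,m-j,0}(\x)$ for all $m\ge 1$ and $0\le j\le m$, which is exactly the palindromicity of $\sum_j Q_{m,j,0}t^j$ in degree $m$.

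Next I would prove this palindromicity by a substitution symmetry of $G(z,t)$. Compute
\[
    G(z,t^{-1})=\frac{1-t^{-1}}{H(t^{-1}z)-t^{-1}H(z)}
    =\frac{t^{-1}(t-1)}{t^{-1}\bigl(tH(t^{-1}z)-H(z)\bigr)}
    =\frac{t-1}{tH(t^{-1}z)-H(z)}.
\]
Now substitute $z\mapsto tz$ in $G(z,t)$: $G(tz,t)=\dfrac{1-t}{H(t^2 z)-tH(tz)}$; this is not quite what appears, so instead I would substitute $z\mapsto tz$ in $G(z,t^{-1})$, obtaining
\[
    G(tz,t^{-1})=\frac{t-1}{tH(z)-H(tz)}=\frac{-(1-t)}{-(H(tz)-tH(z))}=\frac{1-t}{H(tz)-tH(z)}=G(z,t).
\]
Therefore $G(tz,t^{-1})=G(z,t)$. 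Comparing coefficients of $z^m$ on both sides: the left side contributes $t^m\sum_j Q_{m,j,0}(\x)t^{-j}=\sum_j Q_{m,j,0}(\x)t^{m-j}$, while the right side is $\sum_j Q_{m,j,0}(\x)t^j$. Matching coefficients of $t^j$ yields $Q_{m,j,0}(\x)=Q_{m,m-j,0}(\x)$, and combined with the reduction $Q_{n,j,k}=h_k\,Q_{n-k,j,0}$ this gives $Q_{n,j,k}(\x)=Q_{n,n-k-j,k}(\x)$, as desired.

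The main obstacle I anticipate is purely bookkeeping: making sure the degree shift under $z\mapsto tz$ is tracked correctly (each $Q_{m,j,0}$ sits in $z^m$, not $z^{j}$), and confirming that the extracted $k=0$ series $G(z,t)$ is genuinely a polynomial of degree $m$ in $t$ at each order $z^m$ — i.e. that $Q_{m,j,0}=0$ for $j>m$ and $Q_{m,0,0}=h_1^{?}$-type edge cases behave, so that the palindromic center is at $m/2$. These are routine once the generating-function identity $G(tz,t^{-1})=G(z,t)$ is in hand, which itself is just the clearing-of-denominators computation above; there is no representation-theoretic input needed beyond the already-cited formula. (Alternatively, one could cite Theorem 5.9 of \cite{ShareshianWachs2010} directly, but the self-contained generating-function argument is short enough to include.)
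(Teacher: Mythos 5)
Your proof is correct, but it takes a different route from the paper: the paper gives no argument at all for this lemma, simply citing it as a special case of Theorem 5.9 in \cite{ShareshianWachs2010}, whereas you derive it in a self-contained way from the closed-form generating function $\frac{(1-t)H(rz)}{H(tz)-tH(z)}$ recorded just above the statement. Your two steps both check out: extracting the coefficient of $r^k$ gives $Q_{n,j,k}=h_k\,Q_{n-k,j,0}$ (which, incidentally, re-derives the paper's Lemma \ref{lem:ExtractHQ}, also only cited there), and the substitution symmetry $G(tz,t^{-1})=G(z,t)$ follows by clearing denominators exactly as you compute; comparing coefficients of $z^m t^j$ then yields $Q_{m,j,0}=Q_{m,m-j,0}$ and hence the lemma. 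The only point worth tightening is the formal legitimacy of the substitution $t\mapsto t^{-1}$, $z\mapsto tz$: this is harmless because the defining identity $G(z,t)\bigl(H(tz)-tH(z)\bigr)=1-t$ lives in $\Lambda[t,t^{-1}][[z]]$, the substitution is a ring endomorphism there, and the constant term $1-t$ of the denominator is a non-zero-divisor, so one may cancel it after applying the substitution to conclude $G(tz,t^{-1})=G(z,t)$; the edge cases you flag are also fine since $Q_{m,j,0}$ vanishes for $j=0$ and $j\ge m$ when $m\ge 1$. The trade-off is clear: the paper's citation is shorter, while your argument makes the palindromicity transparent from the closed form and keeps the paper self-contained modulo that one cited identity.
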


\begin{lem} \cite[Corollary 3.4]{ShareshianWachs2010} \label{lem:ExtractHQ}
For all $n,j,k$, 
\[
    Q_{n,j,k}(\x)=h_kQ_{n-k,j,0}(\x).
\]
\end{lem}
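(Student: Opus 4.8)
The plan is to obtain the identity by a coefficient extraction from the generating function for $Q_{n,j,k}(\x)$ recalled just above (due to Shareshian--Wachs). Write $H(z)=\sum_{m\ge 0}h_m(\x)z^m$, so that the closed form reads
\[
1+\sum_{n\ge 1,\,j,k\ge 0}Q_{n,j,k}(\x)\,t^jr^kz^n=\frac{(1-t)H(rz)}{H(tz)-tH(z)}.
\]
Setting $r=0$ and using $H(0)=h_0=1$ kills every term with $k>0$ on the left and yields the specialization
\[
1+\sum_{n\ge 1,\,j\ge 0}Q_{n,j,0}(\x)\,t^jz^n=\frac{1-t}{H(tz)-tH(z)},
\]
where we adopt the convention $Q_{0,0,0}(\x)=1$.

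First I would substitute this specialization back into the full generating function and expand $H(rz)=\sum_{m\ge 0}h_m(\x)r^mz^m$, obtaining the factorization
\[
1+\sum_{n\ge 1,\,j,k\ge 0}Q_{n,j,k}(\x)\,t^jr^kz^n=\Bigl(\sum_{m\ge 0}h_m(\x)\,r^mz^m\Bigr)\Bigl(1+\sum_{n\ge 1,\,j\ge 0}Q_{n,j,0}(\x)\,t^jz^n\Bigr).
\]
Then I would compare the coefficient of $t^jr^kz^n$ on the two sides: on the right, the power $r^k$ selects the summand $h_k(\x)z^k$ from the first factor, and what remains is the coefficient of $t^jz^{n-k}$ in $1+\sum Q_{n',j',0}(\x)\,t^{j'}z^{n'}$, namely $Q_{n-k,j,0}(\x)$ (read as $1$ when $n=k,\ j=0$ and as $0$ when $n=k,\ j>0$, and as $0$ when $n<k$). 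This gives exactly $Q_{n,j,k}(\x)=h_k(\x)\,Q_{n-k,j,0}(\x)$.

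Since everything reduces to a routine extraction once the generating function is in hand, I do not expect any real obstacle; the only point needing a word of care is the boundary case $n=k$, which is consistent because the identity permutation is the unique element of $\S_k$ with $k$ fixed points, its $\DEX$ is empty, and so $Q_{k,0,k}(\x)=F_{\emptyset,k}(\x)=h_k(\x)$. If one preferred a generating-function-free argument, the alternative is combinatorial: each $\sigma\in\S_n$ with $\fix(\sigma)=k$, $\exc(\sigma)=j$ arises uniquely from a $k$-subset $T\subseteq[n]$ (serving as both the fixed positions and fixed values) together with a derangement $\tau\in\S_{n-k}$ with $\exc(\tau)=j$ placed order-isomorphically on $[n]\setminus T$, and one checks that summing $F_{\DEX(\sigma),n}(\x)$ over the $\binom{n}{k}$ choices of $T$ for a fixed $\tau$ equals $F_{\emptyset,k}(\x)\,F_{\DEX(\tau),n-k}(\x)=h_k(\x)F_{\DEX(\tau),n-k}(\x)$ by the quasi-shuffle product formula for fundamental quasisymmetric functions. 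In that approach the delicate step is verifying that the fixed-point insertions correspond precisely to the shuffles occurring in $h_k\cdot F_{\DEX(\tau),n-k}$ --- i.e.\ that, with respect to the order $\overline 1<\cdots<\overline n<1<\cdots<n$ underlying $\DEX$, an inserted (necessarily unbarred, since a fixed point is never an excedance) fixed-point letter behaves like a shuffled-in letter and creates or destroys no descents after restandardization; I would handle this by tracking the barred word $\overline{\sigma}$ directly.
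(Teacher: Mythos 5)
Your proof is correct. Note that the paper does not actually prove this lemma---it is quoted as Corollary 3.4 of Shareshian--Wachs---so there is no internal proof to compare against; what you have written is a legitimate self-contained derivation from the closed-form generating function that the paper records immediately above the lemma. Your key observation, that $\frac{(1-t)H(rz)}{H(tz)-tH(z)}$ equals $H(rz)$ times its own $r=0$ specialization because the denominator is independent of $r$, makes the coefficient extraction immediate, and your handling of the boundary cases ($n=k$ with $j=0$ giving $Q_{k,0,k}=h_k$, and the vanishing cases $n<k$ or $n=k$, $j>0$) is consistent with the convention $Q_{0,0,0}=1$. Your alternative combinatorial sketch is also sound in outline and is in fact the route the paper itself takes for the augmented analog (Proposition \ref{prop:ReductionExtCode}), where the identity $F_{\DES(0^k),k}F_{\DES(\overline{\sigma}),n-k}=\sum_{\pi'\in 0^k\shuffle\overline{\sigma}}F_{\DES(\pi'),n}$ is invoked; the step you flag as delicate---that inserting a fixed point $i$ contributes the unbarred letter $i$ to the barred word and that the resulting descent sets are exactly those of the shuffles---is genuinely the crux of that version and is where Shareshian--Wachs's own argument does its work, so you were right to leave the generating-function argument as the primary proof.
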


Now we have the following interpretations of the difference $\Delta_{n,r}(\x)$ between $\grFrob(A(U_{r+1,n}),t)$ and $\grFrob(A(U_{r,n}),t)$.

\begin{prop} \label{prop:DifferenceChow}
For $1\le r\le n-1$, the polynomial $\Delta_{n,r}(\x)\in\Lambda_n[t]$ has two expressions:
\[
   \sum_{i=0}^r h_{n-r+i}Q_{r-i}^0(\x,t)t^i
\]
and
\[
    \sum_{\substack{\sigma\in\S_n \\ \fix(\sigma)\ge n-r}}F_{\DEX(\sigma),n}(\x)t^{r-\exc(\sigma)}.
\]
\end{prop}

\begin{proof}
Note that for $0\le k\le n-1$, as sets we have
\[
    FY^k(U_{1,n})\subseteq FY^k(U_{2,n})\subseteq\ldots\subseteq FY^k(U_{n,n}).
\]
The coefficient $a_{n,r}^{(k)}(\x)$ of $t^k$ in $\Delta_{n,r}(\x)$ is the Frobenius characteristic of the permutation representation generated by $FY^k(U_{r+1,n})\setminus FY^k(U_{r,n})$, which consists of the monomial
\[
    x_{[n]}^r \quad \text{ if }k=r
\]
and monomials
\[
    x_{F_1}^{a_1}\ldots x_{F_\ell}^{a_{\ell}}x_{[n]}^i\in FY^k(U_{r+1,n})  \quad\text{such that}\quad i=(r+1)-|F_\ell|-1=r-|F_\ell|
\]
for $i=0,1,\ldots,k-1$ if $1\le k\le r-1$. Hence $a_{n,r}^{(r)}(\x)=h_n$; and for $1\le k\le r-1$, one can construct an easy $\S_n$-equivariant bijection $x_{F_1}^{a_1}\ldots x_{F_\ell}^{a_{\ell}}x_{[n]}^i\mapsto x_{F_1}^{a_1}\ldots x_{F_\ell}^{a_{\ell}}$ that maps $FY^k(U_{r+1,n})\setminus FY^k(U_{r,n})$ to the set of monomials $x_{F_1}^{a_1}\ldots x_{F_\ell}^{a_{\ell}}$ such that $F_1\subsetneq\ldots\subsetneq F_{\ell}$ and $a_1,\ldots,a_\ell$ satisfy
\begin{itemize}
    \item $|F_\ell|=r-i$,
    \item $a_j\le |F_j|-|F_{j-1}|-1$ for all $j$,
    \item $a_1+\ldots+a_\ell=k-i$,
\end{itemize}
for $i=0,1,\ldots, k-1$. Recalling our notation in (\ref{eq:FYjk}), the above set of monomials can be viewed as $\bigcup_{i=0}^{k-1} FY^{k-i,n-r+i}(\mathsf{B}_n)$; since $FY^{k-i,n-r+i}(\mathsf{B}_n)$ is a permutation basis for $A^{k-i,n-r+i}(\mathsf{B}_n)_{\mathbb{C}}$, by Proposition \ref{prop:RefinedZeroCodes}, the coefficient of $t^k$ in $\Delta_{n,r}(\x)$ is
\begin{equation}
    a_{n,r}^{(k)}(\x)=\sum_{i=0}^{k-1} Q_{n,k-i,n-r+i}.  \label{eq:hQzero}
\end{equation}
Then
\begin{align*}
    \Delta_{n,r}(\x)
    &=\sum_{k=1}^r a_{n,r}^{(k)}(\x)t^k=h_nt^r+\sum_{k=1}^{r-1}\sum_{i=0}^{k-1} h_{n-r+i}Q_{r-i,k-i,0}t^k \quad(\text{by Lemma \ref{lem:ExtractHQ}})\\
    &=h_nt^r+\sum_{i=0}^{r-2} h_{n-r+i}\left(\sum_{k=i+1}^{r-1} Q_{r-i,k-i,0}t^{k-i}\right)t^i
    =h_nt^r+\sum_{i=0}^{r-2} h_{n-r+i}Q_{r-i}^0(\x,t)t^i\\
    &=\sum_{i=0}^{r} h_{n-r+i}Q_{r-i}^0(\x,t)t^i  \quad (\text{since }Q_1^0(\x,t)=0).
\end{align*}
Note that $\Delta_{n,1}(\x)=h_n t$.

On the other hand, we can obtain a different expression from (\ref{eq:hQzero}) as follows
\begin{align*}
    \Delta_{n,r}(\x)
     &=h_nt^r+\sum_{k=1}^{r-1}\sum_{i=0}^{k-1} Q_{n,k-i,n-r+i}t^k\\
     &=h_nt^r+\sum_{k=1}^{r-1}\sum_{i=0}^{k-1} Q_{n,r-k,n-r+i}t^k \quad(\text{by Lemma \ref{lem:PalinQ}})\\
     &=h_nt^r+\sum_{k=1}^{r-1}\sum_{\substack{\sigma\in\S_n \\ \exc(\sigma)=r-k \\ \fix(\sigma)\ge n-r}} F_{\DEX(\sigma),n}(\x)t^k\\
     &=F_{\DEX(e),n}(\x)t^{r} +\sum_{\substack{\sigma\in\S_n\setminus\{e\} \\ \fix(\sigma)\ge n-r}}F_{\DEX(\sigma),n}(\x)t^{r-\exc(\sigma)}\\
     &=\sum_{\substack{\sigma\in\S_n \\ \fix(\sigma)\ge n-r}}F_{\DEX(\sigma),n}(\x)t^{r-\exc(\sigma)}
\end{align*}
where $e$ is the identity in $\S_n$.
\end{proof}

We are ready to prove Theorem \ref{thm:FrobChowUniform}, which we restate here.

\begin{thmn}[\ref{thm:FrobChowUniform}] 
The graded Frobenius series of the Chow ring of arbitrary uniform matroids $\grFrob(A(U_{r,n})_{\mathbb{C}},t)$ has two expressions:
\begin{itemize}
    \item[(i)] \(\displaystyle \sum_{j=0}^{r-1}h_{n-j}Q_j^0(\x,t)(1+t+\ldots+t^{r-j-1})\),
    \item[(ii)] \(\displaystyle Q_n(\x,t)-\sum_{j=r}^{n-1}\sum_{\substack{\sigma\in\S_n\\ \fix(\sigma)\ge n-j}}F_{\DEX(\sigma),n}t^{j-\exc(\sigma)}\).
\end{itemize}
\end{thmn}

\begin{proof}
To prove expression (i), we apply the first expression from Proposition \ref{prop:DifferenceChow} and obtain
\begin{align*}
    \grFrob(A(U_{r,n})_{\mathbb{C}},t)
    &=\grFrob(A(U_{1,n})_{\mathbb{C}},t)+\sum_{j=1}^{r-1}\Delta_{n,j}(\x)=h_n+\sum_{j=1}^{r-1}\Delta_{n,j}(\x)\\
    &=h_n+\sum_{j=1}^{r-1}\sum_{i=0}^j h_{n-j+i}Q_{j-i}^0(\x,t)t^i
    =h_n+\sum_{j=1}^{r-1}\sum_{i=0}^j h_{n-i}Q_{i}^0(\x,t)t^{j-i}\\
    &=h_n+h_n(t+t^2+\ldots+t^{r-1})+\sum_{j=1}^{r-1}\sum_{i=1}^j h_{n-i}Q_{i}^0(\x,t)t^{j-i}\\
    &=h_n(1+t+\ldots+t^{r-1})+\sum_{i=1}^{r-1}\sum_{j=i}^{r-1}h_{n-i}Q_{i}^0(\x,t)t^{j-i}\\
    &=\sum_{i=0}^{r-1}h_{n-i}Q_{i}^0(\x,t)(1+t+\ldots+t^{r-1-i}).
\end{align*}
To prove expression (ii), we apply the second expression from Proposition \ref{prop:DifferenceChow} and obtain
\begin{align*}
    \grFrob(A(U_{r,n})_{\mathbb{C}},t)
    &=\grFrob(A(U_{n,n})_{\mathbb{C}},t)-\sum_{j=r}^{n-1}\Delta_{n,j}(\x)\\
    &=Q_n(\x,t)-\sum_{j=r}^{n-1}\sum_{\substack{\sigma\in\S_n\\ \fix(\sigma)\ge n-j}}F_{\DEX(\sigma),n}t^{j-\exc(\sigma)}.
\end{align*}
\end{proof}

Now Corollary \ref{cor:FrobChowSpecial}, which we restate here, can be proved directly from Theorem \ref{thm:FrobChowUniform}.

\begin{corn}[\ref{cor:FrobChowSpecial}]
For the special case that $r=n$ and $r=n-1$ we have
\begin{itemize}
    \item[(i)] \(\displaystyle \grFrob(A(U_{n,n})_{\mathbb{C}},t)=Q_n(\x,t).\)
    \item[(ii)]\(\displaystyle \grFrob(A(U_{n-1,n})_{\mathbb{C}},t)=t^{-1}Q_n^0(\x,t).\)
\end{itemize}
\end{corn}

\begin{proof}
When $r=n$, the matroid is Boolean. See Remark \ref{rem:BooleanIsProved}. When $r=n-1$, by Theorem \ref{thm:FrobChowUniform} (ii), we have
\begin{align*}
    \grFrob(A(U_{r,n})_{\mathbb{C}},t)
    &=Q_n(\x,t)-\sum_{\substack{\sigma\in\S_n \\ \fix(\sigma)\ge 1}}F_{\DEX(\sigma),n}t^{n-1-\exc(\sigma)}.
\end{align*}
Since $Q_{n}(\x,t)=\sum_{j=0}^{n-1}Q_{n,j}(\x)t^{j}$ and $Q_{n,j}=Q_{n,n-1-j}$ by Lemma \ref{lem:PalinQ}, we have
\begin{align*}
    \grFrob(A(U_{n-1,n})_{\mathbb{C}},t)
    &=\sum_{\sigma\in\S_n}F_{\DEX(\sigma),n}t^{n-1-\exc(\sigma)}-\sum_{\substack{\sigma\in\S_n \\ \fix(\sigma)\ge 1}}F_{\DEX(\sigma),n}t^{n-1-\exc(\sigma)}\\
    &=\sum_{\substack{\sigma\in\S_n \\ \fix(\sigma)=0}}F_{\DEX(\sigma),n}t^{n-1-\exc(\sigma)}
    =\sum_{j=1}^{n-1}Q_{n,j,0}(\x)t^{n-1-j}\\
    &=\sum_{j=1}^{n-1}Q_{n,n-j,0}(\x)t^{n-1-j} \quad \text{(by Lemma \ref{lem:PalinQ})}\\
    &=t^{-1}Q_n^0(\x,t).
\end{align*}
\end{proof}

\subsection{Augmented Chow rings of uniform matroids} \label{Sec:ProofFrobAug}
In this section, our goal is to prove Theorem \ref{thm:FrobAugUniform}, the augmented analog of Theorem \ref{thm:FrobChowUniform}. Similar to the proof of the non-augmented case, we will proceed by calculating the difference between the graded Frobenius series of the augmented Chow ring of $U_{r,n}$ for consecutive $r$, that is
\begin{align*}
    \widetilde{\Delta}_{n,r}(\x)
    &\coloneqq \grFrob(\widetilde{A}(U_{r+1,n})_{\mathbb{C}},t)-\grFrob(\widetilde{A}(U_{r,n})_{\mathbb{C}},t)\\
    &=\widetilde{a}_{n,r}^{(1)}(\x)t+\widetilde{a}_{n,r}^{(2)}(\x)t^2+\cdots+\widetilde{a}_{n,r}^{(r+1)}(\x)t^{r+1}.
\end{align*}
We give two expressions for $\widetilde{\Delta}_{n,r}(\x)$ in Proposition \ref{prop:DifferenceAug}. Then we derive the two formulas in Theorem \ref{thm:FrobAugUniform} from these two expressions by considering 
\[
    \grFrob(\widetilde{A}(U_{r,n})_{\mathbb{C}},t)
    =\grFrob(\widetilde{A}(U_{1,n}),t)+\sum_{j=1}^{r-1}\widetilde{\Delta}_{n,j}(\x)
\]
and 
\[
    \grFrob(\widetilde{A}(U_{r,n})_{\mathbb{C}},t)
    =\grFrob(\widetilde{A}(U_{n,n}),t)-\sum_{j=r}^{n-1}\widetilde{\Delta}_{n,j}(\x).
\]

We first establish some facts that will be useful in expressing $\widetilde{\Delta}_{n,r}(\x)$. These facts are parallel to what we did in the non-augmented case. In Theorem \ref{thm:TildeFY^jPermbasis}, we saw that the graded Frobenius series of the augmented Chow ring of the Boolean matroid is given by the Binomial Eulerian quasisymmetric function 
\[
    \grFrob\left(\widetilde{A}(\mathsf{B}_n)_{\mathbb{C}},t\right)=\widetilde{Q}_n(\x,t)=\sum_{\sigma\in\widetilde{\S}_n}F_{\DEX(\sigma),n}(\x)t^{\exc(\sigma)+1}.
\]
We want an analog of $Q_{n,j,k}(\x)$ for the binomial Eulerian quasisymmetric function $\widetilde{Q}_n(\x,t)$. 
Define the following variant of the binomial Eulerian quasisymmetric function
\[
    \widetilde{Q}_n(\x,t,r)\coloneqq h_n r^n+t\sum_{k=1}^n h_{n-k}Q_k(\x,t)r^{n-k}.
\]
Then $\widetilde{Q}_n(\x,t,1)=\widetilde{Q}_{n}(\x,t)$. Write $\widetilde{Q}_n(\x,t,r)=\sum_{j\ge 0, k\ge 0}\widetilde{Q}_{n,j,k}(\x)t^j r^k$.

\begin{thm} \label{thm:binEulerFix}
For any integer $n\ge 1$, 
\[
	\widetilde{Q}_n(\x,t,r)=\sum_{\sigma\in\widetilde{\mathfrak{S}}_n}F_{\DEX(\sigma),n}(\x)t^{\exc(\sigma)+1}r^{\fixTwo(\sigma)}.
\]
Or equivalently, for all $0\le j,~k\le n$,
\[
    \widetilde{Q}_{n,j,k}(\x)=\sum_{\substack{\sigma\in\widetilde{\S}_n\\ \exc(\sigma)=j-1\\ \fixTwo(\sigma)=k}} F_{\DEX(\sigma),n}(\x).
\]
In particular, 
\[
    \widetilde{Q}_{n,j,0}(\x)=\sum_{\substack{\sigma\in\S_n \\ \exc(\sigma)=j-1}}F_{\DEX(\sigma),n}(\x)=Q_{n,j-1}(\x).
\]
\end{thm}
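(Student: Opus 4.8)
The plan is to expand $\widetilde{Q}_n(\x,t,r)$ from its definition and match it, term by term, with the claimed sum over decorated permutations, grouping by the number of zeros; this runs parallel to the proof of Theorem~\ref{thm:FexpWideQ} in \cite{Liao2023One}, with the single extra variable $r$ carried along. First I would peel off $\theta$: it has $\fixTwo(\theta)=n$, $\exc(\theta)=-1$, and $\DEX(\theta)=\emptyset$, so it contributes $h_n r^n$, matching the leading term of $\widetilde{Q}_n(\x,t,r)$. For $0\le m\le n-1$, a decorated permutation $\sigma$ with $\fixTwo(\sigma)=m$ is the same data as a pair $(S,\pi)$, where $S\subseteq[n]$ with $|S|=n-m$ records the positions of the nonzero entries and $\pi\in\S_{n-m}$ is the standardization of $\sigma$ along the increasing bijection $S\to[n-m]$; write $\sigma=\sigma_{S,\pi}$. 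Since excedances depend only on relative order, $\exc(\sigma_{S,\pi})=\exc(\pi)$. Comparing the coefficient of $r^m$ on the two sides and using the $F$-expansion of $Q_{n-m}(\x,t)$ from Definition~\ref{def:EulerianQuasi}, everything reduces to the identity
\[
\sum_{\substack{S\subseteq[n]\\ |S|=n-m}}F_{\DEX(\sigma_{S,\pi}),n}(\x)=h_m(\x)\,F_{\DEX(\pi),n-m}(\x)\qquad\text{for every }\pi\in\S_{n-m}.
\]

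The key step is to read this off as a shuffle-product identity for Gessel's fundamental quasisymmetric functions. From Definition~\ref{def:DEXext}: in the barred word $\overline{\sigma_{S,\pi}}$ the positions outside $S$ carry the letter $0$, while on the positions of $S$ the word has the same barring pattern and the same relative order of entries as $\overline{\pi}$ --- the increasing bijection $S\to[n-m]$ and the total order $\overline1<\dots<\overline n<0<1<\dots<n$ restrict compatibly. Hence $\overline{\sigma_{S,\pi}}$ is exactly an interleaving of the word $\overline{\pi}$ with $m$ copies of the letter $0$ (the zeros sitting at the positions $[n]\setminus S$), and as $S$ ranges over all $(n-m)$-subsets we obtain each such interleaving exactly once, because $\overline{\pi}$ contains no $0$. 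The multiplication rule for fundamental quasisymmetric functions, applied to $\overline{\pi}$ (on its barred/unbarred alphabet) and $0^m$ (on the one-letter alphabet $\{0\}$, placed in the middle), then gives
\begin{align*}
\sum_{\substack{S\subseteq[n]\\ |S|=n-m}}F_{\DEX(\sigma_{S,\pi}),n}(\x)
&=\sum_{w}F_{\DES(w),n}(\x)=F_{\DES(\overline{\pi}),n-m}(\x)\cdot F_{\emptyset,m}(\x)\\
&=F_{\DEX(\pi),n-m}(\x)\,h_m(\x),
\end{align*}
where $w$ runs over all interleavings of $\overline{\pi}$ with $m$ zeros, and we used $\DES(\overline{\pi})=\DEX(\pi)$ together with $\DES(0^m)=\emptyset$, so $F_{\emptyset,m}=h_m$. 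This is the required identity. Multiplying by $t^{\exc(\pi)+1}$, summing over $\pi\in\S_{n-m}$, and then over $m$, reassembles $\widetilde{Q}_n(\x,t,r)$; the per-coefficient form is merely a restatement obtained by extracting coefficients of $t^jr^k$. For the ``in particular'' clause, the coefficient of $r^0$ in $\widetilde{Q}_n(\x,t,r)$ is simply $t\,Q_n(\x,t)$ (only the $k=n$ summand survives), so $\widetilde{Q}_{n,j,0}(\x)=Q_{n,j-1}(\x)$; equivalently, $\fixTwo(\sigma)=0$ forces $\sigma\in\S_n$, on which $\DEX$ coincides with its $\S_n$-version, and the sum collapses to the $F$-expansion of $Q_n$.

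I expect the main obstacle to be the bookkeeping in the key step: one must check carefully that $\DEX$ on decorated permutations --- where the symbol $0$ lies \emph{between} the barred and the unbarred letters in the total order --- translates verbatim into a shuffle of words, that no multiplicities appear among the $\binom{n}{m}$ placements of the zeros (this is where $\overline{\pi}$ having no $0$ is used), and that the product rule for fundamental quasisymmetric functions applies with the letter $0$ inserted in the middle of the combined alphabet (it does, since the shuffle computation of the product in $\mathrm{QSym}$ does not depend on the chosen total order on the disjoint underlying alphabets). Everything else is routine reindexing.
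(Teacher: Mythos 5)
Your proposal is correct and follows essentially the same route as the paper: expand $\widetilde{Q}_n(\x,t,r)$ from its definition, identify decorated permutations with $m$ zeros with pairs $(S,\pi)$, and reduce to the identity $\sum_{S}F_{\DEX(\sigma_{S,\pi}),n}=h_m F_{\DEX(\pi),n-m}$, which the paper also obtains via the shuffle-product rule for fundamental quasisymmetric functions (citing the proof of Theorem~\ref{thm:FexpWideQ} rather than spelling it out). Your writeup just makes explicit the shuffle bookkeeping that the paper leaves implicit, including the point that the position of $0$ in the total order is immaterial to the product rule.
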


\begin{proof} 
The proof is similar to the proof of Theorem 6.21 in \cite{Liao2023One}. By definition of $\widetilde{Q}_n(\x,t,r)$ and $h_n=F_{\emptyset,n}(\x)$, we have
\begin{align*}
    \widetilde{Q}_n(\x,t,r)
    &=F_{\emptyset,n}(\x)r^n+\sum_{k=1}^n\sum_{\sigma\in\S_k}F_{\emptyset,n-k}(\x)F_{\DEX(\sigma),k}(\x)t^{\exc(\sigma)+1}r^{n-k}\\
    &=F_{\emptyset,n}(\x)r^n+\sum_{k=1}^n\sum_{\substack{\pi\in\widetilde{\S}_n\\ \fixTwo(\pi)=n-k}}F_{\DEX(\sigma),n}(\x)t^{\exc(\pi)+1}r^{n-k}.\\
    &=F_{\emptyset,n}(\x)r^n+\sum_{\pi\in\widetilde{\S}_n\setminus\{\theta\}}F_{\DEX(\sigma),n}(\x)t^{\exc(\pi)+1}r^{\fixTwo(\pi)}\\
    &=\sum_{\pi\in\widetilde{\S}_n}F_{\DEX(\sigma),n}(\x)t^{\exc(\pi)+1}r^{\fixTwo(\pi)}.  
\end{align*}
\end{proof}

Now we give a refinement of Theorem \ref{thm:TildeFY^jPermbasis} regarding $\widetilde{Q}_{n,j,k}(\x)$ that will be used to express $\widetilde{\Delta}_{n,r}(\x)$. 
Let the subset
\[ 
    \widetilde{FY}^{j,k}(\mathsf{B}_n)\coloneqq\left\{
    x_{F_1}^{a_1}\ldots x_{F_\ell}^{a_\ell}\in\widetilde{FY}^j(\mathsf{B}_n): |[n]-F_\ell|=k
    \right\}
\]
and $\widetilde{A}^{j,k}(\mathsf{B}_n)$ be the subspace spanned by $\widetilde{FY}^{j,k}(\mathsf{B}_n)$ which is obviously $\S_n$-invariant.

\begin{prop} \label{prop:RefinedZeroExtCode}
For any $0\le j\le n$ and $0\le k\le n$, we have 
\[
    \ch(\widetilde{A}^{j,k}(\mathsf{B}_n)_{\mathbb{C}})=\widetilde{Q}_{n,j,k}(\x).
\]
\end{prop}

\begin{proof}
To prove this result, we use the combinatorial objects called \emph{extended codes} in the binomial Eulerian story analogous to Stembridge codes in the Eulerian story. We only need extended codes in this proof. The readers are referred to Section 4.1.2 and Theorem 4.7 in \cite{Liao2023One} for the definition and an $\S_n$-equivariant bijection between extended codes and $\widetilde{FY}(\mathsf{B}_n)$.
The author \cite[Theorem 4.3]{Liao2023One} showed that the $\mathbb{C}\S_n$-module spanned by the set $\widetilde{\Code}_{n,j-1}$ of extended codes of index $j-1$ satisfying
\[
    \sum_{j=0}^{n}\ch\left(\mathbb{C}\widetilde{\Code}_{j-1}\right)t^j=\widetilde{Q}_n(\x,t).
\]
The same calculation actually shows that the set $\widetilde{\Code}_{n,j-1,k}$ of extended codes with index $j-1$ and $k$ $\infty$'s satisfies
\[
    \sum_{j=0}^n\sum_{k=0}^n\ch(\mathbb{C}\widetilde{\Code}_{n,j-1,k})t^j r^k=\widetilde{Q}_{n}(\x,t,r),
\]
or equivalently, $\ch(\mathbb{C}\widetilde{\Code}_{n,j-1,k})=\widetilde{Q}_{n,j,k}(\x)$. Now consider the $\S_n$-equivariant bijection $\widetilde{\phi}$ between $\widetilde{FY}(\mathsf{B}_n)$ and extended codes in \cite[Theorem 4.7]{Liao2023One}. The restriction of $\widetilde{\phi}$ to $\widetilde{FY}^{j,k}(\mathsf{B}_n)$ gives us an $\S_n$-equivariant bijection $\widetilde{FY}^{j,k}(\mathsf{B}_n)\longrightarrow\widetilde{\Code}_{n,j,k}$. Hence, we have
\[
    \ch(\widetilde{A}^{j,k}(\mathsf{B}_n))=\widetilde{Q}_{n,j,k}(\x).    
\]

\end{proof}

The following property of $\widetilde{Q}_{n,j,k}(\x)$ is analogous to Lemma \ref{lem:ExtractHQ} of $Q_{n,j,k}$.

\begin{lem} \label{lem:ExtractHTildeQ}
For all $n,j,k$,
\[
    \widetilde{Q}_{n,j,k}(\x)=h_k(\x)\widetilde{Q}_{n-k,j,0}(\x).
\]
\end{lem}

\begin{proof}
By Theorem \ref{thm:binEulerFix}, we have 
\begin{align*}
    h_k(\x)\widetilde{Q}_{n-k,j,0}(\x)
    =&F_{\emptyset,k}(\x)\sum_{\substack{\sigma\in\widetilde{\S}_{n-k} \\ \exc(\sigma)=j-1 \\ \fixTwo(\sigma)=0}} F_{\DEX(\sigma),n-k}(\x)
    =\sum_{\substack{\sigma\in\S_{n-k} \\ \exc(\sigma)=j-1}}F_{\DES(0^{k}),k}(\x) F_{\DES(\overline{\sigma}),n-k}(\x),
\end{align*}
where, recalled from Definition \ref{def:DEXext}, the barred permutation $\overline{\sigma}$ is obtained from $\sigma$ by adding a bar on $\sigma(i)$ for each $i\in\EXC(\sigma)$. Then the right-hand side of the above equality can be rewritten as
\[
    \sum_{\substack{\sigma\in\S_{n-k} \\ \exc(\sigma)=j-1 }}F_{\DES(0^{k}),k}(\x) F_{\DES(\overline{\sigma}),n-k}(\x)
    =\sum_{\substack{\sigma\in\S_{n-k} \\ \exc(\sigma)=j-1}}\sum_{\pi'\in 0^k\shuffle \overline{\sigma}}F_{\DES(\pi'),n}(\x)
\]
where $0^k\shuffle\overline{\sigma}$ is the set of shuffles of $\overline{\sigma}$ and $k$ $0$'s.
For example, the set $0\shuffle \overline{3}12=\{0\overline{3}12,\overline{3}012,\overline{3}102,\overline{3}120\}$.

Now, for any fixed $\sigma\in\S_{n-k}$, there is a bijection between $0^k\shuffle\overline{\sigma}$ and the set of decorated permutations of length $n$ with $k$ $0$'s whose nonzero part equals $\sigma$ after standardization. 
Let $\pi'$ be a word in $0^k\shuffle\overline{\sigma}$, the corresponding decorated permutation is given by leaving the $0$'s in $\pi'$ the same. 
Let $S$ be the indices of the positions that $0$'s occur in $\pi'$. Replace the nonzero part of $\pi'$ by a permutation on $[n]\setminus S$ whose standardization is $\sigma$. This procedure changes a word $\pi'\in 0^k\shuffle\overline{\sigma}$ to a decorated permutations $\pi$ such that $\DES(\pi')=\DES(\overline{\pi})=\DEX(\pi)$ and $\fixTwo(\pi)=k$.
Using our previous example, the words in the set $0\shuffle \overline{3}12$ correspond to the decorated permutations $0\overline{4}23$, $\overline{4}013$, $\overline{4}102$, $\overline{3}120$ in order.

Applying the bijection, we have 
\[
    \sum_{\substack{\sigma\in\S_{n-k} \\ \exc(\sigma)=j-1}}\sum_{\pi'\in 0^k\shuffle \overline{\sigma}}F_{\DES(\pi'),n}(\x)= \sum_{\substack{\pi\in\widetilde{\S}_n\\ \exc(\pi)=j-1 \\ \fixTwo(\sigma)=k}} F_{\DEX(\pi),n}(\x)
    =\widetilde{Q}_{n,j,k}(\x). 
\]
\end{proof}

Using the lemmas and propositions above, we are able to interpret the difference $\widetilde{\Delta}_{n,r}(\x)$ between $\grFrob\left(\widetilde{A}(U_{r+1,n}),t\right)$ and $\grFrob\left(\widetilde{A}(U_{r,n}),t\right)$.

\begin{prop} \label{prop:DifferenceAug}
For $1\le r\le n-1$, the polynomial $\widetilde{\Delta}_{n,r}(\x)\in\Lambda_n[t]$ has two expressions:
\[
    \sum_{j=0}^r h_{n-j}Q_j(\x,t)t^{r-j+1}
\]
and
\[
    \sum_{\substack{\sigma\in\widetilde{\S}_n \\ \fixTwo(\sigma)\ge n-r }}F_{\DEX(\sigma),n}(\x)t^{r-\exc(\sigma)}.
\]
\end{prop}

\begin{proof}
Similar to the case of Chow rings, for $1\le k\le n$ we have 
\[
    \widetilde{FY}^k(U_{1,n})\subsetneq \widetilde{FY}^k(U_{2,n})\subsetneq \ldots\subsetneq \widetilde{FY}^k(U_{n,n}). 
\]
The symmetric function $\widetilde{a}_{n,r}^{(k)}(\x)$ is the Frobenius characteristic of the permutation module generated by $\widetilde{FY}^k(U_{r+1,n})\setminus\widetilde{FY}^k(U_{r,n})$, which consists of monomial
\[
    x_{[n]}^{r+1} \quad \text{ if }k=r+1
\]
and monomials
\[
    x_{F_1}^{a_1}\ldots x_{F_\ell}^{a_\ell}x_{[n]}^i\in \widetilde{FY}^k(U_{r+1,n}) \quad\text{ such that }\quad |F_\ell|=r-i
\]
for $i=0,1,\ldots,k-1$ if $1\le k\le r$. Hence, $\widetilde{a}_{n,r}^{(r+1)}(\x)=h_n$. For $1\le k\le r$, One can construct an $\S_n$-equivariant bijection from $\widetilde{FY}^k(U_{r+1,n})\setminus\widetilde{FY}^k(U_{r,n})$ to $\bigcup_{i=0}^{k-1}\widetilde{FY}^{k-i,n-r-i}(\mathsf{B}_n)$ defined by $x_{F_1}^{a_1}\ldots x_{F_\ell}^{a_\ell}x_{[n]}^i\mapsto x_{F_1}^{a_1}\ldots x_{F_\ell}^{a_\ell}$. By Proposition \ref{prop:RefinedZeroExtCode}, we have
\[
    \widetilde{a}_{n,r}^{(k)}=\sum_{i=0}^{k-1}\widetilde{Q}_{n,k-i,n-r+i}(\x)=\sum_{i=0}^{k-1}h_{n-r+i}\widetilde{Q}_{r-i,k-i,0}(\x) \quad\text{(by Lemma \ref{lem:ExtractHTildeQ})}.
\]
Then since for any $n$ and $0\le j\le n-1$,
\[
    \widetilde{Q}_{n,j,0}(\x)=\sum_{\substack{\sigma\in\widetilde{\S}_{n}\\ \exc(\sigma)=j-1\\ \fixTwo(\sigma)=0}}F_{\DEX(\sigma),n}(\x)=\sum_{\substack{\sigma\in\S_{n}\\ \exc(\sigma)=j-1}}F_{\DEX(\sigma),n}(\x)=Q_{n,j-1}(\x),
\]
we have
\begin{align}
    \widetilde{\Delta}_{n,r}(\x)
    &=\sum_{k=1}^{r+1}\widetilde{a}_{n,r}^{(k)}(\x)t^k=h_nt^{r+1}+\sum_{k=1}^{r}\sum_{i=0}^{k-1}h_{n-r+i}\widetilde{Q}_{r-i,k-i,0}t^k \label{eq:differenceAug}\\
    &=h_nt^{r+1}+\sum_{k=1}^r\sum_{i=0}^{k-1}h_{n-r+i}Q_{r-i,k-i-1}t^k \nonumber\\
    &=h_nt^{r+1}+\sum_{i=0}^{r-1}h_{n-r+i}t^{i+1}\left(\sum_{k=i+1}^rQ_{r-i,k-i-1}t^{k-i-1}\right) \nonumber \\
    &=h_nt^{r+1}+\sum_{i=0}^{r-1}h_{n-r+i}t^{i+1}Q_{r-i}(\x,t) \nonumber\\
    &=\sum_{i=0}^r h_{n-r+i}Q_{r-i}(\x,t)t^{i+1} \nonumber\\
    &=\sum_{j=0}^r h_{n-j}Q_j(\x,t)t^{r-j+1}.  \nonumber
\end{align}

On the other hand, by Theorem \ref{thm:binEulerFix} and Lemma \ref{lem:PalinQ},  $\widetilde{Q}_{r-i,k-i,0}=Q_{r-i,k-i-1}(\x)=Q_{r-i,r-k}(\x)=\widetilde{Q}_{r-i,r-k+1,0}$. Therefore, equation (\ref{eq:differenceAug}) can be expressed in a different way as follows
\begin{align*}
    \widetilde{\Delta}_{n,r}(\x)
    &=h_nt^{r+1}+\sum_{k=1}^r\sum_{i=0}^{k-1}h_{n-r+i}\widetilde{Q}_{r-i,r-k+1,0}t^k\\
    &=h_nt^{r+1}+\sum_{k=1}^r\sum_{i=0}^{k-1}\widetilde{Q}_{n,r-k+1,n-r+i}t^k \quad\text{(by Lemma \ref{lem:ExtractHTildeQ}})\\
    &=h_nt^{r+1}+\sum_{k=1}^r\sum_{\substack{\sigma\in\widetilde{\S}_n\\ \exc(\sigma)=r-k \\ \fixTwo(\sigma)\ge n-r}}F_{\DEX(\sigma),n}(\x)t^k\\
    &=F_{\DEX(\theta),n}(\x)t^{r-(-1)}+\sum_{\substack{\sigma\in\widetilde{\S}_n\setminus\{\theta\} \\ \fixTwo(\sigma)\ge n-r }}F_{\DEX(\sigma),n}(\x)t^{r-\exc(\sigma)}\\
    &=\sum_{\substack{\sigma\in\widetilde{\S}_n \\ \fixTwo(\sigma)\ge n-r }}F_{\DEX(\sigma),n}(\x)t^{r-\exc(\sigma)}.
\end{align*}

\end{proof}

We are ready to prove Theorem \ref{thm:FrobAugUniform}, which we restate here.

\begin{thmn}[\ref{thm:FrobAugUniform}] \label{thm:FrobAugUniform2}
The graded Frobenius series of the augmented Chow ring of arbitrary uniform matroids $\grFrob\left(\widetilde{A}(U_{r,n})_{\mathbb{C}},t\right)$ has two expressions:
\begin{itemize}
    \item[(i)] \(\displaystyle h_n+t\sum_{j=0}^{r-1}h_{n-j}Q_j(\x,t)(1+t+\ldots+t^{r-j-1})\),
    \item[(ii)] \(\displaystyle \widetilde{Q}_n(\x,t)-\sum_{j=r}^{n-1}\sum_{\substack{\sigma\in\widetilde{\S}_n\\ \fixTwo(\sigma)\ge n-j}}F_{\DEX(\sigma),n}t^{j-\exc(\sigma)}\).
\end{itemize}
\end{thmn}

\begin{proof}
For (i), applying the first expression of Proposition \ref{prop:DifferenceAug}, we have
\begin{align*}
    \grFrob(\widetilde{A}(U_{r,n})_{\mathbb{C}},t)
    &=\grFrob(\widetilde{A}(U_{1,n}),t)+\sum_{j=1}^{r-1}\widetilde{\Delta}_{n,j}(\x)\\
    &=h_n(1+t)+\sum_{j=1}^{r-1}\sum_{i=0}^j h_{n-i}Q_i(\x,t)t^{j-i+1}\\
    &=h_n(1+t)+\sum_{j=1}^{r-1}h_n t^{j+1}+\sum_{j=1}^{r-1}\sum_{i=1}^j h_{n-i}Q_i(\x,t)t^{j-i+1}\\
    &=h_n+t\sum_{j=0}^{r-1}h_n t^{j}+t\sum_{i=1}^{r-1}h_{n-i}Q_i(\x,t)\sum_{j=i}^{r-1}t^{j-i}\\
    &=h_n+t\sum_{i=0}^{r-1}h_{n-i}Q_i(\x,t)(1+t+\ldots+t^{r-i-1}).
\end{align*}
For (ii), applying the second expression from Proposition \ref{prop:DifferenceAug}, we have
\begin{align*}
    \grFrob(\widetilde{A}(U_{r,n})_{\mathbb{C}},t)
    &=\grFrob(\widetilde{A}(U_{n,n})_{\mathbb{C}},t)-\sum_{j=r}^{n-1}\widetilde{\Delta}_{n,j}(\x)\\
    &=\widetilde{Q}_n(\x,t)-\sum_{j=r}^{n-1}\sum_{\substack{\sigma\in\widetilde{\S}_n \\ \fixTwo(\sigma)\ge n-j}}F_{\DEX(\sigma),n}(\x)t^{j-\exc(\sigma)}.
\end{align*}
\end{proof}

Then Corollary \ref{cor:FrobAugSpecial}, which we restate below, follows directly from Theorem \ref{thm:FrobAugUniform}.

\begin{corn}[\ref{cor:FrobAugSpecial}]
For the special cases that $r=n$ and $r=n-1$, we have
\begin{itemize}
    \item[(i)] \(\displaystyle \grFrob(\widetilde{A}(U_{n,n})_{\mathbb{C}},t)=\widetilde{Q}_n(\x,t),\)
    \item[(ii)]\(\displaystyle \grFrob(\widetilde{A}(U_{n-1,n})_{\mathbb{C}},t)=Q_n(\x,t).\)
\end{itemize}
\end{corn}

\begin{proof}
When $r=n$, nothing is being subtracted on the right-hand side of Theorem \ref{thm:FrobAugUniform2} (ii); hence (i) follows.

When $r=n-1$, by Theorem \ref{thm:FrobAugUniform2} (ii), we have
\begin{equation}
    \grFrob(\widetilde{A}(U_{n-1,n})_{\mathbb{C}},t)=\widetilde{Q}_n(\x,t)-\sum_{\substack{\sigma\in\widetilde{\S}_n\\ \fixTwo(\sigma)\ge 1}}F_{\DEX(\sigma),n}t^{n-1-\exc(\sigma)}. \label{eq:AugSpecial}
\end{equation}
Recall from Corollary 3.6 in \cite{ShareshianWachs2020} that $\widetilde{Q}_n(\x,t)$ as a polynomial in $\Lambda_n[t]$ is palindromic; hence 
\[
    \widetilde{Q}_n(\x,t)=\sum_{\sigma\in\widetilde{\S}_n}F_{\DEX(\sigma),n}t^{\exc(\sigma)+1}=\sum_{\sigma\in\widetilde{\S}_n}F_{\DEX(\sigma),n}t^{n-1-\exc(\sigma)}.
\]
Then (\ref{eq:AugSpecial}) can be rewritten as
\begin{align*}
    \grFrob(\widetilde{A}(U_{n-1,n})_{\mathbb{C}},t)
    &=\sum_{\substack{\sigma\in\widetilde{\S}_n\\ \fixTwo(\sigma)=0}}F_{\DEX(\sigma),n}t^{n-1-\exc(\sigma)}\\
    &=\sum_{\sigma\in\S_n}F_{\DEX(\sigma),n}t^{n-1-\exc(\sigma)}\\
    &=\sum_{\sigma\in\S_n}F_{\DEX(\sigma),n}t^{\exc(\sigma)} \quad \text{(palindromic by Lemma \ref{lem:PalinQ})}\\
    &=Q_n(\x,t).
\end{align*}
   
\end{proof}

\section{Parallelity between $U_{r,n}$ and $U_{r,n}(q)$} \label{Sec:Parellel}

Our goal in this section is to derive the formulas of the Hilbert series for the $q$-uniform matroids in Section \ref{subsec:q-uniform}. While one could adapt the proof strategy of Section \ref{Sec:ProofFrobChow} and \ref{Sec:ProofFrobAug}, we instead present a more conceptual argument using  Steinberg's results on unipotent representations of the general linear group $GL_n(q)\coloneqq GL_n(\mathbb{F}_q)$ from \cite{Steinberg1951geometric}. This approach yields Theorem \ref{thm:psToDim}, which simultaneously unifies the Frobenius-series formulas for the (augmented) Chow rings of $U_{r,n}$ and the Hilbert-series formulas for those of $U_{r,n}(q)$.

Recall from the final paragraph of Section \ref{subsec:ChowRings} that, for every $r$, the Chow ring  $A(U_{r,n})$ and the augmented Chow ring $\widetilde{A}(U_{r,n})$ are $\S_n$-permutation modules with permutation bases $FY(U_{r,n})$ and $\widetilde{FY}(U_{r,n})$ respectively. 
So are $A(U_{r,n})_{\mathbb{C}}$ and $\widetilde{A}(U_{r,n})_{\mathbb{C}}$. 
Since the basis elements are monomials indexed by chains in the Boolean lattice $B_n$, recalling our notation from Section \ref{subsec:RankSelection}, both $A(U_{r,n})_{\mathbb{C}}$ and $\widetilde{A}(U_{r,n})_{\mathbb{C}}$ are isomorphic to the direct sum of $\S_n$-permutation modules $\alpha_{B_n}(S)$. If $S=\{s_1<s_2<\ldots<s_\ell\}\subseteq [n-1]$, then 
\begin{equation} \label{eq:PermRepBooleanChain}
    \alpha_{B_n}(S)\cong_{\S_n} \mathbb{C}[\S_n/\S_{\nu}]\cong_{\S_n} \mathbf{1}_{\S_{\nu}}\uparrow_{\S_{\nu}}^{\S_n}, 
\end{equation}
where $\nu$ is the composition $\nu(S)=(s_1,s_2-s_1,\ldots,s_\ell-s_{\ell-1},n-s_{\ell})$ and $\S_{\nu}$ is the Young subgroup for the composition $\nu$. A well-known fact in the representation theory of $\S_n$ states that the induced representation (\ref{eq:PermRepBooleanChain}) can be decomposed into irreducible $\S_n$-modules $\{S^{\lambda}\}_{\lambda\vdash n}$ as
\begin{equation} \label{eq:MDecomposionSn}
    \mathbf{1}_{\S_{\nu}}\uparrow_{\S_{\nu}}^{\S_n}\cong_{\S_n} \bigoplus_{\lambda\vdash n}K_{\lambda,\nu}S^{\lambda}
\end{equation}
where $K_{\lambda,\nu}$ is the well-known \emph{Kostka number} (see Sagan \cite[Propostion 5.3.6]{Sagan2013symmetric}).

Similarly, since the general linear group $GL_n(q)$ acts on the $q$-Boolean lattice $B_n(q)$, i.e., the subspace lattice of $\left(\mathbb{F}_q\right)^n$, in an obvious way, both $A(U_{r,n}(q))_{\mathbb{C}}$ and $\widetilde{A}(U_{r,n}(q))_{\mathbb{C}}$ are permutation representations of $GL_n(q)$ with permutation bases $FY(U_{r,n}(q))$ and $\widetilde{FY}(U_{r,n}(q))$, respectively. Since the basis elements are monomials indexed by chains in $B_n(q)$, both $A(U_{r,n}(q))_{\mathbb{C}}$ and $\widetilde{A}(U_{r,n}(q))_{\mathbb{C}}$ are isomorphic to the direct sum of $GL_n(q)$-permutation modules $\alpha_{B_n(q)}(S)$. If $S=\{s_1<s_2<\ldots<s_\ell\}\subseteq [n-1]$, then 
\[
    \alpha_{B_n(q)}(S)\cong_{GL_n(q)} \mathbb{C}[GL_n(q)/P_{\nu}]\cong_{GL_n(q)}  \mathbf{1}_{P_{\nu}}\uparrow_{P_{\nu}}^{GL_n(q)}, 
\]
where $\nu=(s_1,s_2-s_1,\ldots,s_\ell-s_{\ell-1},n-s_{\ell})$ and $P_{\nu}$ is the maximal parabolic subgroup of $GL_n(q)$ that stabilizes the flag
\[
    \langle e_1,\ldots,e_{s_1}\rangle\subsetneq \langle e_1,\ldots, e_{s_1},\ldots,e_{s_2}\rangle\subsetneq\ldots\subsetneq \langle e_1,\ldots,e_{s_1},\ldots,e_{s_2},\ldots,e_{s_\ell}\rangle,
\]
i.e., the subgroup of matrices having main diagonal block matrices of size $s_1$,$s_2-s_1$,$\ldots$,$n-s_{\ell}$ and $0$ for entries below the main diagonal block matrices.

There is a parallelity between the representations of $\S_n$ and some representations of $GL_n(q)$. In the following, we will briefly describe this story, which mostly follows \cite[Section 5]{Stanley1982GroupPoset} and the lecture notes \cite[Section 4.2, 4.6, 4.7]{GrinbergReinerHopf2014}.

Steinberg \cite{Steinberg1951geometric} showed that the decomposition of $\mathbf{1_{P_{\nu}}}\uparrow_{P_{\nu}}^{GL_n(q)}$ into irreducible representations of $GL_n(q)$ is parallel to the decomposition of $\mathbf{1_{\S_{\nu}}}\uparrow_{\S_{\nu}}^{\S_n}$ given in (\ref{eq:MDecomposionSn}). 
More precisely, there is a distinguished family $\{S^{\lambda}_q\}$ of irreducible representations of $GL_n(q)$, indexed by partitions $\lambda$ of $n$, such that for any composition $\nu$ one has
\begin{equation} \label{eq:parallel}
    \mathbf{1}\uparrow_{P_{\nu}}^{GL_n(q)}\cong_{GL_n(q)} \bigoplus_{\lambda\vdash n}K_{\lambda,\nu}S^{\lambda}_q.    
\end{equation}
These $\{S^{\lambda}_q\}_{\lambda\vdash n}$, originally studied by Steinberg, are now referred to as the \emph{unipotent representations} of $GL_n(q)$. 
Let $\chi_q^\lambda$ denote the character of the unipotent representation $S_q^\lambda$ for each partition $\lambda$. For each $n$, let $\C(GL_n(q))$ be the free $\mathbb{Z}$-module spanned by the characters of unipotent reprentations of $GL_n(q)$ and define $\C(GL(q))\coloneqq \bigoplus_{n=0}^\infty \C(GL_n(q))$. 
One can make $\C(GL(q))$ a $\mathbb{Z}$-graded algebra via the \emph{parabolic induction product}:
\begin{align*}
    \C(GL_{n_1}(q))\times \C(GL_{n_2}(q)) &\longrightarrow \C(GL_{n_1+n_2}(q))\\ 
        (f_1, f_2) & \longmapsto f_1*f_2\coloneqq \left(f_1\otimes f_2 \Uparrow_{GL_{n_1}(q)\times GL_{n_2}(q)}^{P_{(n_1,n_2)}}\right)\uparrow_{P_{(n_1,n_2)}}^{GL_{n_1+n_2}(q)},
\end{align*}
where $(-)\Uparrow_{GL_{n_1}(q)\times GL_{n_2}(q)}^{P_{n_1,n_2}}$ is the \emph{inflation operation} that precomposes a representation of $GL_{n_1}(q)\times GL_{n_2}(q)$ with the surjective map $P_{n_1,n_2}\twoheadrightarrow GL_{n_1}(q)\times GL_{n_2}(q)$ given by 
\[
\left[\begin{array}{cc}
   A  &  C \\
   0  &  B
\end{array}\right]\mapsto 
\left[\begin{array}{cc}
   A  &  0 \\
   0  &  B
\end{array}\right].
\]
Because of the parallelity in (\ref{eq:parallel}), there is a $q$-analog of the Frobenius characteristic map, which is a $\mathbb{Z}$-graded isomorphism between $\C(GL(q))$ and the ring of symmetric functions $\Lambda_\mathbb{Z}$ over $\mathbb{Z}$,
\[
    \ch_q: \C(GL(q))\longrightarrow \Lambda_{\mathbb{Z}}
\]
sending $\ch_q(\chi^{\lambda}_q)=s_{\lambda}$ and $\ch_q(\mathbf{1}_{GL_n(q)})=h_n$. If $\chi$ is a character of a representation $V$, we sometimes write $\ch_q(V)$ to mean $\ch_q(\chi)$.

Hence for a graded $\mathbb{C}[GL_n(q)]$-module $W=\bigoplus_{j} W^j$ constituted by unipotent representations of $GL_n(q)$, one can define the \emph{$q$-graded Frobenius series} of $W$ by
\[
    \grFrob_q(W,t)\coloneqq\sum_{j}\ch_q(W^j)t^j.
\]

For any subset $S\subseteq [n-1]$ determining the composition $\nu=\nu(S)=(\nu_1,\ldots,\nu_{\ell+1})$, the induced representations satisfy
\[
    \ch_q\left(\alpha_{B_n(q)}(S)\right)=\ch_q\left(\mathbf{1}\uparrow_{P_{\nu}}^{GL_n(q)}\right)=h_{\nu_1}h_{\nu_2}\ldots h_{\nu_{\ell+1}}=\ch(\mathbf{1}_{\S_{\nu}}\uparrow_{\S_{\nu}}^{\S_n})=\ch\left(\alpha_{B_n}(S)\right).
\]
Moreover, the dimension of the induced representation of $GL_n(q)$ is given by the stable principal specialization of the corresponding induced representation of $\S_n$:
\begin{align} 
    \dim_{\mathbb{C}}(\mathbf{1}\uparrow_{P_{\nu}}^{GL_n(q)})
    =& |GL_n(q)/P_{\nu}|=\qbin{n}{\nu_1,\ldots,\nu_{\ell+1}}
    = \prod_{i=1}^n(1-q^i)
    \ps(h_{\nu}) \nonumber\\
    =& \prod_{i=1}^n(1-q^i)\ps\left(\ch(\mathbf{1}_{\S_{\nu}}\uparrow_{\S_{\nu}}^{\S_n})\right). \label{eq:DimIsPSQ}
\end{align}

Now let us revisit the structure of the representations on the Chow rings. Recall from (\ref{eq:FYChowBasis}) the combinatorial constraints on the monomials in $FY(U_{r,n})$ and $FY(U_{r,n}(q))$. Fix an integer $0\le i\le r-1$ and a subset $S = \{s_1 < s_2 < \cdots < s_\ell\}\subseteq [n-1]$. 
Then the multiplicities of $\alpha_{B_n}(S)$ in $A^i(U_{r,n})_{\mathbb{C}}$ and $\alpha_{B_n(q)}(S)$ in $A^i(U_{r,n}(q))_{\mathbb{C}}$ 
are both equal to the number of integer solutions \((a_1,\dots,a_\ell,b)\) of
\[  
    \left\{\begin{array}{l}
    a_1+a_2+\ldots+a_\ell+b=i\\
    1\le a_j\le s_j-s_{j-1}-1 \text{ for }j=1,\ldots,\ell \quad (\text{set }s_0\coloneqq 0)\\
    0\le b\le r-s_{\ell}-1
    \end{array}
    \right..
\] 
It follows that, under the usual Frobenius and $q$-Frobenius characteristic maps, the $\mathfrak{S}_n$-representation $A^i(U_{r,n})_{\mathbb{C}}$ and the $GL_n(q)$-representation $A^i(U_{r,n}(q))_{\mathbb{C}}$ have identical characters. The same holds for the representations on the augmented Chow rings.

\begin{prop} \label{prop:FrobIsQFrob}
For all $1\le r\le n$, we have
\[
    \grFrob_q(A(U_{r,n}(q))_{\mathbb{C}},t)=\grFrob(A(U_{r,n})_{\mathbb{C}},t), 
\]
\[
    \grFrob_q(\widetilde{A}(U_{r,n}(q))_{\mathbb{C}},t)=\grFrob(\widetilde{A}(U_{r,n})_{\mathbb{C}},t).
\]
\end{prop}

Moreover, combining the multiplicity argument with (\ref{eq:DimIsPSQ}) shows that, for every $i$, the $\mathbb{Q}$-dimensions of $A^i(U_{r,n}(q))$ and $\widetilde{A}^i(U_{r,n}(q))$ can be obtained by taking the stable principal specialization of $\ch(A^i(U_{r,n})_{\mathbb{C}})$ and $\ch(\widetilde{A}^i(U_{r,n})_{\mathbb{C}})$, respectively. 
Consequently, we arrive at the following result.
\begin{thm} \label{thm:psToDim}
For all $1\le r\le n$, the Hilbert series of the Chow ring and augmented Chow ring of arbitrary $q$-uniform matroids are given by
\[
    \Hilb\left(A(U_{r,n}(q)),t\right)=\prod_{i=1}^n(1-q^i)\ps\left(\grFrob\left(A(U_{r,n})_{\mathbb{C}},t\right)\right),
\]
\[
    \Hilb\left(\widetilde{A}(U_{r,n}(q)),t\right)=\prod_{i=1}^n(1-q^i)\ps\left(\grFrob\left(\widetilde{A}(U_{r,n})_{\mathbb{C}},t\right)\right).
\]
\end{thm}

By Theorem \ref{thm:psToDim} and the relations in Table \ref{table:relations}, applying the stable principle specialization to the graded Frobenius series for uniform matroids in Theorems \ref{thm:FrobChowUniform}, \ref{thm:FrobAugUniform} and Corollaries \ref{cor:FrobChowSpecial}, \ref{cor:FrobAugSpecial}, we derive the Hilbert series formula for $q$-uniform matroids in Theorems \ref{HRS:ChowQUniform}, \ref{thm:HilbQUniform}, \ref{thm:HilbAugQUniform} and Corollaries \ref{HRS:qHilbChowSpecial}, \ref{cor:SpeicalCaseHilbAug}.

Similarly, the stable principal specialization of the differences $\Delta_{n,r}(\x)$ and $\widetilde{\Delta}_{n,r}(\x)$ in Proposition \ref{prop:DifferenceChow} and \ref{prop:DifferenceAug} gives us the difference of the Hilbert series for $q$-uniform matroids. The results might be of interest, so we record them here.
 
\begin{cor} \label{cor:DiffQUniform}
For $1\le r\le n-1$, 
\begin{enumerate}
    \item[(i)] the difference $\Hilb(A(U_{r+1,n}(q)),t)-\Hilb(A(U_{r,n}(q)),t)$ has two expressions:
    \[
        \sum_{i=0}^r\qbin{n}{i}d_{i}(q,t)t^{r-i}\quad \text{ and }\quad \sum_{\substack{\sigma\in\S_n \\ \fix(\sigma)\ge n-r}}q^{\maj(\sigma)-\exc(\sigma)}t^{r-\exc(\sigma)}.
    \]
    \item[(ii)] the difference $\Hilb(\widetilde{A}(U_{r+1,n}(q)),t)-\Hilb(\widetilde{A}(U_{r,n}(q)),t)$ has two expressions:
    \[
        \sum_{i=0}^r\qbin{n}{i}A_i(q,t)t^{r+1-i} \quad \text{ and }\quad \sum_{\substack{\sigma\in\widetilde{\S}_n\\ \fixTwo(\sigma)\ge n-r}}q^{\maj(\sigma)-\exc(\sigma)}t^{r-\exc(\sigma)}.
    \]    
\end{enumerate}

\end{cor}

% \section{Main Results} \label{Sec:ResultUniform}

% Following from Section \ref{Sec:Parellel}, we know that the Frobenius characteristic on the (augmented) Chow ring of $U_{r,n}$ and the $q$-Frobenius characteristic on the (augmented) Chow ring of $U_{r,n}(q)$ gives the same symmetric function. And the Hilbert series of the (augmented) Chow ring of the $U_{r,n}(q)$ can be obtained by taking the stable principal specialization of this symmetric function. Therefore, to study these $\S_n$ and $GL_n(q)$-representations and their dimensions, it is sufficient to compute the Frobenius series of the (augmented) Chow ring of $U_{r,n}$. 

\section{Charney--Davis quantities for (augmented) Chow rings of matroids} \label{Sec:GeneralEQ}

We prove our two general theorems, Theorem \ref{thm:EqCDChow} and Theorem \ref{thm:EqCDAug}, of the equivariant Charney--Davis quantities for general matroids in Section \ref{subsec:generalCD}. Then in Section \ref{subsec:CDUniform} we consider the special cases of uniform matroids and $q$-uniform matroids.

\subsection{Charney--Davis quantities for general matroids} \label{subsec:generalCD}

Let $M$ be a matroid on $E=[n]$ of rank $r$ with the lattice of flats $\L(M)$. Let $G$ be any subgroup of $\Aut(M)$. In this section, we will prove Theorem \ref{thm:EqCDChow} and Theorem \ref{thm:EqCDAug}, which are our formulas for the $G$-equivariant Charney--Davis quantities of (augmented) Chow rings of general matroids.

Recall from Section \ref{subsec:RankSelection} the background on rank-selected subposets.
Consider the permutation representation $\alpha_{\L(M)}(S)$ and the virtual representation $\beta_{\L(M)}(S)$ of $G$ for $S\subseteq [r-1]$. We have, by Theorem \ref{thm:BetaGenRep}, that 
\begin{equation}  \label{eq:IsHomology}
    \beta_{\L(M)}(S)\cong_{G}\tilde{H}_{|S|-1}(\L(M)_S).
\end{equation}

On the other hand, the action of $G$ on $\L(M)$ also induces actions on the Chow ring $A(M)$ and the augmented Chow ring $\widetilde{A}(M)$. 
Recall from (\ref{thm:FYChowBasis}) and Corollary \ref{thm:FYAugBasis} that the Feichtner--Yuzvinsky bases for the $A(M)$ and $\widetilde{A}(M)$, respectively, are
\[
    FY(M)=\left\{x_{F_1}^{a_1}x_{F_2}^{a_2}\ldots x_{F_\ell}^{a_\ell}:\substack{~\emptyset=F_0\subsetneq F_1\subsetneq F_2\subsetneq\ldots\subsetneq F_\ell\subseteq \L(M) \text{ for } 0\le \ell\le n \\
    1\le a_i\le \rk_M(F_i)-\rk_M(F_{i-1})-1}\right\}
\]
and 
\[
	\widetilde{FY}(M)\coloneqq\left\{x_{F_1}^{a_1}x_{F_2}^{a_2}\ldots x_{F_\ell}^{a_\ell}: \substack{\emptyset\subsetneq F_1\subsetneq F_2\subsetneq\ldots\subsetneq F_\ell \text{ for }0\le \ell\le n \\
    1\le a_1\le\rk_M(F_1),~ a_i\le\rk_M(F_i)-\rk_M(F_{i-1})-1 \text{ for }i\ge 2}
    \right\}.
\]
Similar to the case of uniform matroids described in Section \ref{Sec:Parellel}, the subsets $FY^k(M)$ and $\widetilde{FY}^k(M)$ of degree $k$ monomials form permutation bases for the $G$-module $A^k(M)$ and $\widetilde{A}^k(M)$ respectively for every $k$.
Therefore, as representations of $G$, both $A(M)$ and $\widetilde{A}(M)$ are the direct sums of permutation representations of the form $\alpha_{\L(M)}(S)$ for some $S\subseteq [r-1]$.

Now we are ready to prove Theorem \ref{thm:EqCDChow}, which we restate here.

\begin{thmn}[\ref{thm:EqCDChow}] 
Let $M$ be a (loopless) matroid on $[n]$ of rank $r$, 
\[
    \sum_{i=0}^{r-1}(-1)^i A^i(M)_{\mathbb{C}}\cong_{G}
    \begin{cases}
        0   &   \text{ if $r$ is even}\\
        (-1)^{\frac{r-1}{2}}\tilde{H}_{\frac{r-3}{2}}\left(\L(M)_{\Even(r-1)}\right)
         & \text{ if $r$ is odd}
    \end{cases}.
\]
\end{thmn}
\begin{proof}
Following from the constraints of exponents of the FY-basis elements in $FY(M)$, we have
\begin{equation} \label{eq:RepA(M)}
    \sum_{i=0}^{r-1}A^i(M)_{\mathbb{C}}t^i
    \cong_G \sum_{S=\{r_1<\ldots<r_\ell\}\subseteq [r]}\prod_{j=1}^\ell \frac{t(1-t^{r_i-r_{i-1}-1})}{1-t}\alpha_{\L(M)}(S)
\end{equation}
with the convention that $r_0\coloneqq 0$. 
Notice that here we allow $S$ to be any subset of $[r]$, because the definiton of $\alpha_{\L(M)}(S)$ can be easily extended to $S$ containing $r$ and $\alpha_{\L(M)}(S)\cong_G\alpha_{\L(M)}(S\setminus\{r\})$ for any $S$ containing $r$. Then (\ref{eq:RepA(M)}) can be rewritten as
\begin{align*}
    &\sum_{i=0}^{r-1}A^i(M)_{\mathbb{C}}t^i\\
    \cong_G  &\sum_{\emptyset\neq S=\{r_1<\ldots<r_\ell\}\subseteq [r-1]}\prod_{j=1}^{\ell}\frac{t(1-t^{r_j-r_{j-1}-1})}{1-t}\left(1+\frac{t(1-t^{r-r_{\ell}-1})}{1-t}\right)\alpha_{\L(M)}(S)\\
    & +\left(1+\frac{t(1-t^{r-1})}{1-t}\right)\alpha_{\L(M)}(\emptyset).
\end{align*}     
Evaluating the above identity in $R_{\mathbb{C}}(G)[t]$ at $t=-1$, for a term in the summation to be nonzero, all $r_i$'s are forced to be even. The product $\prod_{j=1}^{\ell}\frac{t(1-t^{r_j-r_{j-1}-1})}{1-t}$ evaluated at $t=-1$ gives $(-1)^{\ell}$.  If $r$ is also even, then both $1+\frac{t(1-t^{r-1})}{1-t}$ and $1+\frac{t(1-t^{r-r_{\ell}-1})}{1-t}$ evaluated at $t=-1$ give
\[
    1+\frac{(-1)(1-(-1))}{1-(-1)}=1+(-1)=0.
\]
Therefore, $\sum_{i=0}^{r-1}(-1)^i A^i(M)_{\mathbb{C}}=0$ if $r$ is even.

If $r$ is odd, then both $\frac{t(1-t^{r-1})}{1-t}$ and $\frac{t(1-t^{r-r_{\ell}-1})}{1-t}$ vanish after evaluated at $t=-1$. In this case, as a virtual representation of $G$, the evaluation at $t=-1$ gives
\begin{align*}
    \sum_{i=0}^{r-1}(-1)^i A^i(M)_{\mathbb{C}} 
    &\cong_G \sum_{S\subseteq\Even(r-1)}(-1)^{|S|}\alpha_{\L(M)}(S)\\
    &=(-1)^{\frac{r-1}{2}}\sum_{S\subseteq\Even(r-1)}(-1)^{\frac{r-1}{2}-|S|}\alpha_{\L(M)}(S)\\
    &=(-1)^{\frac{r-1}{2}}\beta_{\L(M)}(\Even(r-1))\\
    &\cong_G (-1)^{\frac{r-1}{2}}\tilde{H}_{\frac{r-3}{2}}\left(\L(M)_{\Even(r-1)}\right) \quad (\text{by }(\ref{eq:IsHomology})).
\end{align*}
\end{proof}

Next, we prove the augmented analog of Theorem \ref{thm:EqCDChow}, Theorem \ref{thm:EqCDAug}, which we restate below. 
Recall that we have a similar $G$-action on the FY-basis $\widetilde{FY}(M)$ for $\widetilde{A}(M)$. 
\begin{thmn}[\ref{thm:EqCDAug}]
Let $M$ be a matroid on $[n]$ of rank $r$. Then
\[
    \sum_{i=0}^{r}(-1)^i\widetilde{A}^i(M)_{\mathbb{C}}\cong_{G}
    \begin{cases}
        (-1)^{\frac{r}{2}}\tilde{H}_{\frac{r-2}{2}}\left(\L(M)_{\Odd(r-1)}\right)   &   \text{ if $r$ is even}\\
        0   & \text{ if $r$ is odd}
    \end{cases}.
\]
\end{thmn}

\begin{proof}
The proof is similar to that of Theorem \ref{thm:EqCDChow}. Following from the restriction of the exponents of the FY-basis elements in $\widetilde{FY}(M)$, we have
\begin{align*}
    &\sum_{i=0}^r\widetilde{A}^i(M)_{\mathbb{C}}t^i
    \cong_G \sum_{S=\{r_1<\ldots<r_\ell\}\subseteq [r]}\frac{t(1-t^{r_1})}{1-t}\prod_{j=2}^\ell \frac{t(1-t^{r_i-r_{i-1}-1})}{1-t}\alpha_{\L(M)}(S)\\
    =& \sum_{\emptyset\neq S=\{r_1<\ldots<r_\ell\}\subseteq [r-1]}\frac{t(1-t^{r_1})}{1-t}\prod_{j=2}^{\ell}\frac{t(1-t^{r_j-r_{j-1}-1})}{1-t}\left(1+\frac{t(1-t^{r-r_{\ell}-1})}{1-t}\right)\alpha_{\L(M)}(S)\\
    & +\left(1+\frac{t(1-t^{r})}{1-t}\right)\alpha_{\L(M)}(\emptyset).
\end{align*}
After evaluating at $t=-1$, the right-hand side of the last identity is nonvanishing only when all $r_i$'s are odd and $r$ is even. Thus, in this case, the $(-1)$-evaluation gives
\begin{align*}
    \sum_{i=0}^{r}(-1)^i\widetilde{A}(M)_{\mathbb{C}}
    &\cong_G \sum_{S\subseteq\Odd(r-1)}(-1)^{|S|}\alpha_{\L(M)}(S)\\
    &=(-1)^{\frac{r}{2}}\sum_{S\subseteq\Odd(r-1)}(-1)^{\frac{r}{2}-|S|}\alpha_{\L(M)}(S)\\
    &=(-1)^{\frac{r}{2}}\beta_{\L(M)}(\Odd(r-1))\\
    &\cong_G (-1)^{\frac{r}{2}}\tilde{H}_{\frac{r-2}{2}}(\L(M)_{\Odd(r-1)}) \quad (\text{by }(\ref{eq:IsHomology}))
\end{align*}

\end{proof}

\subsection{Special cases of uniform matroids and their $q$-analogs} \label{subsec:CDUniform}

Let $M$ be the uniform matroid $U_{r,n}$. Then the automorphism group $\Aut(U_{r,n})$ is the symmetric group $\S_n$. Note that $\L(U_{r,n})$ is the rank-selected poset ${(B_n)}_{[r-1]}$ of the Boolean lattice $B_n$. Then by Theorem \ref{thm:RankHomologyBn},
\[
    \tilde{H}_{\frac{r-3}{2}}\left(\L(U_{r,n})_{\Even(r-1)}\right)=\tilde{H}_{\frac{r-3}{2}}\left({(B_n)}_{\Even(r-1)}\right)\cong_{\S_n} S^{H_{\Even(r-1),n}}.
\]
Theorem \ref{thm:EqCDChow} in the case of uniform matroids $U_{r,n}$ together with the above isomorphism yields the following new results. 

\begin{thm} \label{thm:CDEqChowUniform}
For any positive integer $n$ and $1\le r\le n$, the $\S_n$-equivariant Charney--Davis quantity of the Chow ring of a uniform matroid is given by
\[
    \sum_{i=0}^{r-1}(-1)^i A^i(U_{r,n})_{\mathbb{C}}\cong_{\S_n}
    \begin{cases}
        0   &   \text{ if $r$ is even}\\
        (-1)^{\frac{r-1}{2}}S^{H_{\Even(r-1),n}}
         & \text{ if $r$ is odd}
    \end{cases}.
\]
or, equivalently, as graded Frobenius series
\[
    \grFrob(A(U_{r,n})_{\mathbb{C}},-1)=
    \begin{cases}
        0   &   \text{ if $r$ is even}\\
        (-1)^{\frac{r-1}{2}}s_{H_{\Even(r-1),n}}
         & \text{ if $r$ is odd}
    \end{cases}.
\]
where $s_{D}$ is the Schur function of skew shape $D$.
\end{thm}

Similarly, for the augmented case, we obtain the following. 

\begin{thm} \label{thm:CDEqAugUniform}
For any positive integer $n$ and $1\le r\le n$, the $\S_n$-equivariant Charney--Davis quantity of the augmented Chow ring of a uniform matroid is given by
\[
    \sum_{i=0}^{r}(-1)^i \widetilde{A}^i(U_{r,n})_{\mathbb{C}}\cong_{\S_n}
    \begin{cases}
        (-1)^{\frac{r}{2}}S^{H_{\Odd(r-1),n}}   &   \text{ if $r$ is even}\\
        0    & \text{ if $r$ is odd}
    \end{cases},
\]
or, equivalently, as graded Frobenius series
\[
    \grFrob(\widetilde{A}(U_{r,n})_{\mathbb{C}},-1)=
    \begin{cases}
        (-1)^{\frac{r}{2}}s_{H_{\Odd(r-1),n}}   &   \text{ if $r$ is even}\\
        0,   & \text{ if $r$ is odd}
    \end{cases}
\]
\end{thm}

Apply $\prod_{i=1}^n(1-q^i)\ps(-)$ to the symmetric function identities in Theorem \ref{thm:CDEqChowUniform} and \ref{thm:CDEqAugUniform}. Since for $R\subseteq [n-1]$, we have
\[
    \prod_{i=1}^n(1-q^i)\ps(s_{R,n})=\sum_{\substack{\sigma\in\S_n \\ \DES(\sigma)=R}}q^{\maj(\sigma^{-1})}=\sum_{\substack{\sigma\in\S_n \\ \DES(\sigma)=R}}q^{\inv(\sigma)}
\]
(see \cite[Theorem 4.4]{ShareshianWachs2020} and its proof for a detailed discussion), then Theorem \ref{thm:psToDim} together with the above identity leads to our formulas of the Charney--Davis quantities for $q$-uniform matroids $U_{r,n}(q)$. 
In Theorem \ref{thm:SignedQUniform} (i) below, Hameister, Rao, and Simpson's formula (\ref{HRS:sign}) of the Charney--Davis quantity is simplified to an elegant combinatorial interpretation, which is new even in the case of $q=1$.

\begin{thm} \label{thm:SignedQUniform}
For any positive integer $n$ and $1\le r\le n$, we have:
\begin{enumerate}
    \item[(i)] The Charney--Davis quantity of the Chow ring of an arbitrary $q$-uniform matroid is given by
    \[
    \Hilb(A(U_{r,n}(q)),-1)=\begin{cases}
        0  & \mbox{ if $r$ is even}\\
        \displaystyle (-1)^{\frac{r-1}{2}}\!\!\!\!\!\sum_{\substack{\sigma\in\S_n\\ \DES(\sigma)=\Even(r-1)}}q^{\inv(\sigma)} & \mbox{ if $r$ is odd}
        \end{cases}.
    \]
    \item[(ii)] The Charney--Davis quantity of the augmented Chow ring of an arbitrary $q$-uniform matroid is given by
    \[
    \Hilb(\widetilde{A}(U_{r,n}(q)),-1)=\begin{cases}
         \displaystyle (-1)^{\frac{r}{2}}\sum_{\substack{\sigma\in\S_n\\ \DES(\sigma)=\Odd(r-1)}}q^{\inv(\sigma)} & \mbox{ if $r$ is even}\\
        0  & \mbox{if $r$ is odd}
    \end{cases}.
    \]
\end{enumerate}
In particular, both $CD(A(U_{r,n}(q)))$ and $CD(\widetilde{A}\left(U_{r,n}(q))\right)$ are polynomials in $\mathbb{Z}_{\ge 0}[q]$.
\end{thm}

\begin{rem}
Following from Stanley \cite[Example 2.2.4]{StanleyEC1}, the $q$-polynomial $\sum_{\substack{\sigma\in\S_n\\ \DES(\sigma)=\Even(r-1)}}q^{\inv(\sigma)}$ in Theorem \ref{thm:SignedQUniform} (i) can be expressed as one determinant
\[
    [n]_q!\left|
    \begin{array}{cccccc}
        \frac{1}{[2]_q!} &         1        &   0   & \ldots & 0  & 0\\
        \frac{1}{[4]_q!} & \frac{1}{[2]_q!} &   1   & \ldots & 0  & 0\\
        \frac{1}{[6]_q!} & \frac{1}{[4]_q!} &\frac{1}{[2]_q!} &\ldots & 0 & 0\\
        \vdots           & \vdots           & \vdots& \ddots & \vdots & \vdots\\
        \frac{1}{[r-1]_q!} & \frac{1}{[r-3]_q!} & \frac{1}{[r-5]_q!} & \ldots & \frac{1}{[2]_q!} & 1\\
        \frac{1}{[n]_q!} & \frac{1}{[n-2]_q!}& \frac{1}{[n-4]_q!} & \ldots & \frac{1}{[n-r+3]_q!} & \frac{1}{[n-r+1]_q!}
\end{array}
    \right|,
\]
which simplifies Hameister, Rao, and Simpson's formula (\ref{HRS:determinant}). The $q$-polynomial can also be obtained by realizing the alternating sum in (\ref{HRS:sign}) as a process of inclusion-exclusion.
\end{rem}

\begin{rem}
Combining with Corollary \ref{HRS:qHilbChowSpecial}, Theorem \ref{thm:SignedQUniform} (i) generalizes Foata and Han's result in \cite[Theorem 1]{FoataHan2010}. 
\end{rem}

\begin{rem}
Similarly, the $q$-polynomial $\sum_{\substack{\sigma\in\S_n\\ \DES(\sigma)=\Odd(r-1)}}q^{\inv(\sigma)}$ in Theorem \ref{thm:SignedQUniform} (ii) can be expressed as 
\[
    [n]_q!\left|
    \begin{array}{cccccc}
        \frac{1}{[1]_q!} &         1        &   0   & \ldots & 0  & 0\\
        \frac{1}{[3]_q!} & \frac{1}{[2]_q!} &   1   & \ldots & 0  & 0\\
        \frac{1}{[5]_q!} & \frac{1}{[4]_q!} &\frac{1}{[2]_q!} &\ldots & 0 & 0\\
        \vdots           & \vdots           & \vdots& \ddots & \vdots & \vdots\\
        \frac{1}{[r-1]_q!} & \frac{1}{[r-2]_q!} & \frac{1}{[r-4]_q!} & \ldots & \frac{1}{[2]_q!} & 1\\
        \frac{1}{[n]_q!} & \frac{1}{[n-1]_q!}& \frac{1}{[n-3]_q!} & \ldots & \frac{1}{[n-r+3]_q!} & \frac{1}{[n-r+1]_q!}
\end{array}
    \right|.
\]
On the other hand, one can obtain the following augmented analog of Hameister, Rao, and Simpson's formula (\ref{HRS:sign}) by the principle of inclusion-exclusion 
\[ 
    (-1)^{\frac{r}{2}}\sum_{\substack{\sigma\in\S_n\\ \DES(\sigma)=\Odd(r-1)}}q^{\inv(\sigma)}=1+\sum_{k=0}^{\frac{r-2}{2}}(-1)^{k+1}\qbin{n}{2k+1}E^*_{2k+1}(q),
\]
where $E^*_{2k+1}(q)=\sum_{\sigma\in\Alt_{2k+1}}q^{\inv(\sigma)}$ and $
\Alt_{2k+1}=\{\sigma\in\S_{2k+1}:\sigma_1>\sigma_2<\sigma_3\ldots<\sigma_{2k+1}\}$ is the set of \emph{alternating permutations} in $\S_{2k+1}$. Note that $E_n(q)$ and $E^*_n(q)$ are related by $E^*_{n}(q)=q^{\binom{n}{2}}E_n(q)$.

\end{rem}

Again, because Theorem \ref{thm:psToDim} tells us that the Hilbert series for $U_{r,n}(q)$ agrees with the stable principle specialization of the Frobenius characteristics for $U_{r,n}$, letting $q=1$ gives us the Hilbert series for $U_{r,n}$. 
Therefore, we obtain the Charney--Davis quantities for $U_{r,n}$ by setting $q=1$ in Theorem \ref{thm:SignedQUniform}.

\begin{cor} \label{cor:SignedUniform}
For any positive integer $n$ and $1\le r\le n$, we have:
\begin{enumerate}
    \item[(i)] The Charney--Davis quantity of the Chow ring of an arbitrary uniform matroid is given by
    \[
        \Hilb(A(U_{r,n}),-1)=
        \begin{cases}
        0  & \mbox{ if $r$ is even}\\
        (-1)^{\frac{r-1}{2}}|\{\sigma\in\S_n: \DES(\sigma)=\Even(r-1)\}| & \mbox{ if $r$ is odd}
        \end{cases}.
    \]
    \item[(ii)] The Charney--Davis quantity of the augmented Chow ring of an arbitrary uniform matroid is given by
    \[
        \Hilb\left(\widetilde{A}(U_{r,n}),-1\right)
        =\begin{cases}
        (-1)^{\frac{r}{2}}|\{\sigma\in\S_n: \DES(\sigma)=\Odd(r-1)\}|  & \mbox{ if $r$ is even}\\
        0 & \mbox{ if $r$ is odd}
    \end{cases}.   
    \]
\end{enumerate}
In particular, both $CD(A(U_{r,n}))$ and $CD\left(\widetilde{A}(U_{r,n})\right)$ are nonnegative. 
\end{cor}

\begin{rem}
When $r=n$, the Hilbert series of $A(U_{n,n})$ and $\widetilde{A}(U_{n,n})$ are exactly the $h$-polynomials of the permutahedron $\mathsf{Perm}_n$ and the stellohedron $\mathsf{St}_n$, respectively. Hence Corollary \ref{cor:SignedUniform} (i) and (ii) recover the facts that the Charney--Davis quantities of the permutahedron and the stellohedron give the tangent number and the secant number, respectively:
\[
    CD(\mathsf{Perm}_n)=\begin{cases}
    0 & \text{ if $n$ is even}\\
    E_n & \text{ if $n$ is odd}
    \end{cases}; \qquad
    CD(\mathsf{St}_n)=\begin{cases}
    E_n & \text{ if $n$ is even}\\
    0   & \text{ if $n$ is odd}
    \end{cases}.
\]

\end{rem}

\section{Some Further developements} \label{sec:Final}

Within the framework of palindromic polynomials, there is a generalization of the nonnegativity of the Charney--Davis quantity called \emph{$\gamma$-positivity} or \emph{$\gamma$-nonnegativity}.  
It has been shown independently by Ferroni, Matherne, Stevens, Vecchi \cite[Theorem 3.25]{Ferroni2022hilbert} and Wang (see \cite[p. 29]{Ferroni2022hilbert}) that the Hilbert series of $A(M)$ and $\widetilde{A}(M)$ are $\gamma$-positive, which follows from the work of Braden, Huh, Matherne, Proudfoot, and Wang \cite{BHMPW2020} on the \emph{semismall decomposition} of Chow rings. 

One also has a notion of \emph{equivariant $\gamma$-positivity} for a graded representation $A=\bigoplus_{i=0}^r A^i$, which was first considered by Shareshian and Wachs \cite[Section 5]{ShareshianWachs2020}.
Consider the subgroup $G$ of $\Aut(M)$ acting on $A(M)$ and $\widetilde{A}(M)$. It has been conjectured by Angarone, Nathanson, and Reiner \cite[Conjecture 5.2]{ANR2023PermRepchow} that $A(M)_{\mathbb{C}}=\bigoplus_{i=0}^{r-1}A^i(M)$ is $G$-equivariant $\gamma$-positive. In \cite{Liao2024EqGamma}, Liao generalized Theorem \ref{thm:EqCDChow} and \ref{thm:EqCDAug}, showing that both $A(M)$ and $\widetilde{A}(M)$ are $G$-equivariant $\gamma$-positive. This result confirms the conjecture of Angarone, Nathanson, and Reiner.

Since the preliminary version of this paper appeared on $\texttt{arXiv}$, there have been several interesting developments beyond the equivariant $\gamma$-positivity results previously mentioned.

Ferroni, Matherne, and Vecchi \cite{FMVChowPoly} generalized the notion of the (augmented) Chow polynomial of a matroid—that is, the Hilbert series of the (augmented) Chow ring—to any finite graded poset and integrated it into the Kazhdan--Lusztig--Stanley theory (see \cite{localhStanley1992, ProudfootKLSpoly2018}). Theorems \ref{thm:EqCDChow} and \ref{thm:EqCDAug} can be extended to the equivariant Chow polynomial for Cohen--Macaulay posets using the same proof. 

Hoster \cite{Hoster2024chow} gave interpretations to the (augmented) Chow polynomials of $U_{r,n}$, distinct from Theorems \ref{HRS:ChowQUniform} and \ref{thm:HilbAugQUniform}, in terms of \emph{``permuted'' Schubert matroids} with \emph{cogirths} greater than $n-r$. 
Since cogirth is invariant under matroid isomorphism, this interpretation can be extended to the $h$-expansion of $\S_n$-equivariant (augmented) Chow polynomials.
In the case of $U_{n,n}$, the interpretations can be proven via an $\S_n$-equivariant bijection from $\widetilde{FY}(U_{n,n})$ to permuted Schubert matroids, as given by Eur, Huh, and Larson \cite[Proof of Theorem 7.7]{EHL2022stellahedral}. However, restricting this bijection to $\widetilde{FY}(U_{r,n})$ does not yield Hoster's interpretation. It would be interesting to characterize the image of this restriction and compare it with Hoster's interpretation.

\section*{Acknowledgements}
This work is part of the author's PhD thesis at the University of Miami. 
The author thanks Michelle Wachs for her guidance and encouragement at every stage of this project, and Patricia Commins and Richard Stanley for helpful conversations on unipotent representations of $GL_n(q)$. The author thanks the anonymous referees for carefully reviewing the manuscript and providing suggestions that significantly improved the exposition.

\bibliographystyle{alpha}
\bibliography{bibliography}

\end{document}